\documentclass[11pt,letterpaper]{amsart}
\pdfoutput=1

\usepackage{hyperref}
\usepackage{amsopn,amssymb,amsthm}
\usepackage{enumerate}
\usepackage{lpic}
\usepackage{caption}
\usepackage{setspace}
\usepackage{float}

\newcommand{\cC}{\mathcal{C}}
\newcommand{\cP}{\mathcal{P}}
\newcommand{\Mod}{\mathrm{Mod}}
\newcommand{\NCL}{\mathrm{NCL}}

\numberwithin{equation}{section}

\theoremstyle{plain}
\newtheorem{thm}{Theorem}
\newtheorem{cor}[thm]{Corollary}

\newtheorem{lemma}[thm]{Lemma}
\newtheorem{prop}[thm]{Proposition}

\theoremstyle{definition}
\newtheorem{definition}[thm]{Definition}
\newtheorem{example}[thm]{Example}
\newtheorem{question}[thm]{Question}

\newtheorem{remark}[thm]{Remark}

\newtheoremstyle{named}{}{}{\itshape}{}{\bfseries}{.}{.5em}{\thmnote{#3 }#1}
\theoremstyle{named}
\newtheorem*{namedtheorem}{Theorem}

\title[finite subgraphs of the curve graph]{on the complexity of finite \\subgraphs of the curve graph}
\author{Edgar A. Bering IV, Gabriel Conant, and Jonah Gaster}

\address{Department of Mathematics, Statistics, and Computer Science, 
University of Illinois at Chicago \\
Chicago, IL 60607}
\email{eberin2@uic.edu}

\address{Department of Mathematics, University of Notre Dame \\
Notre Dame, IN 46556}
\email{gconant@nd.edu}

\address{Department of Mathematics, Boston College \\ Chestnut Hill, MA 02467}
\email{gaster@bc.edu}

\begin{document}
\date{September 8, 2016}

\begin{abstract} 
We say a graph has property $\mathcal{P}_{g,p}$ when it is an induced subgraph of the curve graph of a surface of genus $g$ with $p$ punctures.
Two well-known graph invariants, the chromatic and clique numbers, can provide obstructions to $\cP_{g,p}$.
We introduce a new invariant of a graph, the \emph{nested complexity length},
which provides a novel obstruction to $\cP_{g,p}$.
For the curve graph this invariant captures the topological complexity of the surface in graph-theoretic terms; indeed we show that 
its value is $6g-6+2p$, i.e. twice the size of a maximal multicurve on the surface.
As a consequence we show that large `half-graphs' do not have $\cP_{g,p}$, and we deduce quantitatively that almost all finite graphs which pass the chromatic and clique tests do not have $\cP_{g,p}$. 
We also reinterpret our obstruction in terms of the first-order theory of the curve graph, and in terms of RAAG subgroups of the mapping class group (following Kim and Koberda).
Finally, we show that large multipartite subgraphs cannot have $\cP_{g,p}$. 
This allows us to compute the upper density of the curve graph, and to conclude that clique size, chromatic number, and nested complexity length are not sufficient to determine $\cP_{g,p}$.
\end{abstract}

\maketitle


\section{Statement of results}
\label{intro}

Let $S$ indicate a hyperbolizable surface of genus $g$ with $p$ punctures
(i.e.~$2g+p>2$). The \emph{curve graph} of $S$, denoted $\cC(S)$, is the infinite graph whose vertices are isotopy classes of simple closed curves on $S$ and whose edges are given by pairs of curves that can be realized disjointly. Let $\cP_{g,p}$ indicate the property that a graph is an induced subgraph of
the curve graph $\cC(S)$. 
We are concerned with the following motivating question:

\begin{question}
\label{motivation}
Which finite graphs have $\cP_{g,p}$? When is $\cP_{g,p}$ obstructed?
\end{question}

The low complexity cases $\cP_{0,3}$, $\cP_{0,4}$, and $\cP_{1,1}$ are trivial, so we assume further that $3g+p\ge 5$. See \S\ref{sec: notation} for details and complete definitions.

Property $\cP_{g,p}$ has been considered in different guises in the literature \cite{ehrlich-even-tarjan,crisp-wiest1,crisp-wiest2,koberda,kim-koberda1,kim-koberda2,kim-koberda3}. 
It is not hard to see that every finite graph has $\cP_{g,0}$ for large enough $g$, though it is remarkable that there exist finite graphs which do not have $\cP_{0,p}$ for any $p$ \cite[\S2]{ehrlich-even-tarjan}.\footnote{The authors thank Sang-Hyun Kim for bringing this reference to our attention.}
Question \ref{motivation} above is especially salient when $g$ and $p$ are fixed, and we adopt this point of view in everything that follows.

There are few known obstructions to a graph $G$ having property $\cP_{g,p}$. 
The simplest is the presence of a clique of $G$ that is too large, as the size of a maximal complete subgraph of $\cC(S)$ is $3g-3+p$. 
A more subtle obstruction follows from a surprising fact proved by Bestvina, Bromberg, and Fujiwara: 
the graph $\cC(S)$ has finite chromatic number \cite{bbf,kim-koberda1}.

We introduce an invariant of a graph $G$ which we call the `nested complexity length' $\NCL(G)$ that controls the topological complexity of any surface whose curve graph contains $G$ as an induced subgraph (see \S\ref{definition} for a precise definition). 
The following is our main result, providing a new obstruction to $\cP_{g,p}$.
In fact, our calculation applies equally well to the \emph{clique graph} $\cC^{cl}(S)$ of $\cC(S)$, whose vertices are multicurves and with edges for disjointness.

\begin{thm}
\label{induced subgraph thm}
We have $\NCL(\cC(S)) = \NCL(\cC^{cl}(S)) = 6g-6+2p$.
\end{thm}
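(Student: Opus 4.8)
\medskip\noindent\textbf{Proof proposal.} The two equalities follow from a single two-sided estimate. Since $\cC(S)$ is an induced subgraph of $\cC^{cl}(S)$ (a simple closed curve is a one-component multicurve, and in both graphs adjacency means disjointness), and $\NCL$ is monotone under passing to induced subgraphs --- immediate from the definition in \S\ref{definition}, as a nested sequence in a subgraph is again one in the ambient graph --- we have $\NCL(\cC(S)) \le \NCL(\cC^{cl}(S))$. It therefore suffices to prove $\NCL(\cC(S)) \ge 6g-6+2p$ and $\NCL(\cC^{cl}(S)) \le 6g-6+2p$; the chain of inequalities then collapses to the claimed equalities.

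For the lower bound, set $N = 3g-3+p$, fix a pants decomposition $\gamma_1, \dots, \gamma_N$ of $S$, and for each $i$ choose a ``dual'' curve $\delta_i$ meeting $\gamma_i$ but disjoint from $\gamma_j$ for every $j \ne i$; such a $\delta_i$ exists because $\gamma_i$ remains essential and non-peripheral in some component of complexity at least $1$ of the cut surface $S \setminus \bigcup_{j \ne i} \gamma_j$, and one can moreover take the $\delta_i$ pairwise disjoint. I will verify that the $2N$ curves $\gamma_1, \dots, \gamma_N, \delta_1, \dots, \delta_N$, in a suitable order, form a nested sequence of length $2N$ in $\cC(S)$: the $\gamma_i$ form a clique, every cross-pair $\gamma_i, \delta_j$ with $i \ne j$ is adjacent, and the only non-adjacencies are the diagonal crossings $\gamma_i \pitchfork \delta_i$, which is exactly the pattern demanded in \S\ref{definition}. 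Checking this against the definition is routine.

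For the upper bound, suppose $\cC^{cl}(S)$ contains a nested sequence of length $2n$, realized by multicurves $a_1, \dots, a_n$ and $b_1, \dots, b_n$ obeying the defining adjacency relations. The plan is to distill from this configuration $n$ pairwise non-isotopic simple closed curves on $S$ that can be realized simultaneously disjointly, i.e.\ a multicurve with $n$ components; as a multicurve on $S$ has at most $N = 3g-3+p$ components, this yields $n \le N$ and hence $\NCL(\cC^{cl}(S)) \le 2N$, finishing the proof (the curve-graph equality following by monotonicity as above). For each $i$, the non-adjacency of $a_i$ and $b_i$ is witnessed by a single component $c_i$ --- of $a_i$, say --- that genuinely crosses $b_i$, while the adjacency of $a_i$ with the vertices indexed by $j > i$ constrains $c_i$ relative to that later data. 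The main obstacle is turning the $c_i$ into an honest multicurve: the nested adjacency pattern records disjointness only ``in one direction,'' so neither the pairwise distinctness of the $c_i$ nor the absence of essential intersections among them is automatic. Overcoming this should use the linear order of the sequence in an essential way --- cutting $S$ along the later curves (or along suitable dual curves), passing to the resulting subsurfaces, and an innermost-curve argument --- to force the $c_i$ to be disjoint and distinct. Working with multicurves rather than single curves, as $\cC^{cl}(S)$ requires, costs only additional bookkeeping, since the extraction argument never uses that the ambient vertices have one component; this is also why the bound holds verbatim for the clique graph.
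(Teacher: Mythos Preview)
Your lower bound is essentially the paper's: a pants decomposition with transversals, ordered appropriately, yields a nested complexity sequence of length $2(3g-3+p)$. Fine.

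The upper bound, however, has a genuine gap. First, a parametrization issue: a nested complexity sequence of length $m$ is a tuple $(b_1,\ldots,b_m)$ together with witnesses $a_1,\ldots,a_{m-1}$; it is not naturally split as ``$n$ $a$'s and $n$ $b$'s'' when $m=2n$. More seriously, you never actually produce the multicurve. You identify the obstacle correctly---the adjacency pattern gives disjointness only of each $a_k$ from $b_1,\ldots,b_k$, not among the $b_i$ or among the $a_k$ themselves---and then write ``overcoming this should use the linear order\ldots and an innermost-curve argument,'' which is a plan, not a proof. There is no mechanism offered for why exactly half of the sequence length can be realized as pairwise disjoint, pairwise non-isotopic curves; the factor of two is the entire content of the upper bound, and you have not explained where it comes from.

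The paper takes a different route that sidesteps this difficulty. Given a nested complexity sequence $(\beta_1,\ldots,\beta_n)$ in $\cC^{cl}(S)$ with witnesses $\alpha_k$, set $\Sigma_k$ to be the minimal subsurface supporting $\{\beta_1,\ldots,\beta_k\}$. The nested-neighborhood condition translates (via a short ``certificate'' lemma) into strict containment $[\Sigma_1]<[\Sigma_2]<\cdots<[\Sigma_n]$ of isotopy classes of subsurfaces. A separate lemma then bounds the length of any such chain by $2\xi(S)=6g-6+2p$: at each step either the complexity $\xi(\Sigma_k)$ strictly increases, or one records the core of an annular component of $\Sigma_k$ that cannot be pushed off $\Sigma_{k+1}$, and these cores assemble into a multicurve. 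Both counters are bounded by $\xi(S)$, giving the factor of two. This subsurface-chain argument is what is missing from your proposal; the ``extract a multicurve directly'' idea, as stated, does not supply it.
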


The nested complexity length of a graph is obtained via a supremum over all \emph{nested complexity sequences}, and while this definition is useful in light of the theorem above, we know of no non-exhaustive algorithm that computes the nested complexity length of a finite graph. Thus it is natural to ask:

\begin{question}
\label{NCL algorithm}
What is an algorithm to compute the nested complexity length of a finite graph? How can one find effective upper bounds?
\end{question}

As a starting point toward this question,  Proposition \ref{bp-upper-bound} gives an upper bound for the nested complexity length of a graph $G$ which is exponential in the maximal size of a complete bipartite subgraph of $G$.

We describe several corollaries of Theorem \ref{induced subgraph thm} below. 
The first concerns \emph{half-graphs}, a family of graphs that has attracted study in combinatorics and model theory. 
\begin{definition}
\label{half-graph}
Given an integer $n\geq 1$ and a graph $G$, we say that $G$ is a \emph{half-graph of height $n$} if there is a partition $\{a_1,\ldots,a_n\}\cup\{b_1,\ldots,b_n\}$ of the vertices of $G$ such that the edge $e(a_i,b_j)$ occurs if and only if $i\geq j$. 
The unique \emph{bipartite} half-graph of height $n$ is denoted $H_n$.
\end{definition}
In Example \ref{NCL example}, we observe that if $G$ is a half-graph of height $n$ then $\NCL(G)\geq n$. 
Because $\NCL$ is monotone on induced subgraphs, the following is an immediate consequence of Theorem \ref{induced subgraph thm}.

\begin{cor}
\label{omits half-graphs}
If $G$ is a half-graph of height $n$, with $n>6g-6+2p$, then $G$ does not have $\cP_{g,p}$.
\end{cor}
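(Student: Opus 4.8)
The plan is to deduce this immediately from Theorem \ref{induced subgraph thm}, combining it with the monotonicity of $\NCL$ under passing to induced subgraphs and the lower bound on the nested complexity length of half-graphs recorded in Example \ref{NCL example}. The argument is by contradiction: suppose $G$ is a half-graph of height $n$ with $n > 6g-6+2p$ and that $G$ nonetheless has $\cP_{g,p}$, so that $G$ is isomorphic to an induced subgraph of $\cC(S)$ for the relevant $S$.

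First I would invoke Example \ref{NCL example} to get $\NCL(G) \geq n$. Next, since $\NCL$ is monotone on induced subgraphs and $G$ sits inside $\cC(S)$ as an induced subgraph, we obtain $\NCL(G) \leq \NCL(\cC(S))$. Finally, Theorem \ref{induced subgraph thm} identifies $\NCL(\cC(S)) = 6g-6+2p$. Chaining these inequalities yields $n \leq \NCL(G) \leq \NCL(\cC(S)) = 6g-6+2p < n$, which is absurd. Hence no such $G$ has $\cP_{g,p}$. (The same argument applies verbatim with $\cC(S)$ replaced by the clique graph $\cC^{cl}(S)$, using the other equality in Theorem \ref{induced subgraph thm}.)

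There is no genuine obstacle in this deduction itself — the authors have already flagged it as an immediate consequence — so the write-up is essentially the three-line chain above. If one wants to locate where the real work sits, it is in the two inputs rather than in the corollary: Example \ref{NCL example} must produce, from the half-graph partition $\{a_1,\dots,a_n\}\cup\{b_1,\dots,b_n\}$ with $e(a_i,b_j)$ iff $i\geq j$, an explicit nested complexity sequence of length $n$ (presumably built from the nested family of "initial segments" on each side, whose nesting pattern reproduces the linear order $i \geq j$), and Theorem \ref{induced subgraph thm} is the main theorem of the paper. Given both, Corollary \ref{omits half-graphs} follows with no further estimates.
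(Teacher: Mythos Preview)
Your proof is correct and matches the paper's approach exactly: the paper likewise derives the corollary immediately from Theorem~\ref{induced subgraph thm} by combining the lower bound $\NCL(G)\ge n$ from Example~\ref{NCL example} with the monotonicity of $\NCL$ on induced subgraphs (Proposition~\ref{prop NCL}). There is nothing to add.
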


Since $H_n$ is 2-colorable and triangle-free, Corollary \ref{omits half-graphs} implies:

\begin{cor}
\label{obstructions1}
Chromatic number and maximal clique size are not sufficient to determine if a finite graph has $\cP_{g,p}$.
\end{cor}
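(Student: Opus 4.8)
The plan is to produce two finite graphs that share the same chromatic number and the same maximum clique size, but only one of which has $\cP_{g,p}$; the existence of such a pair shows that no rule depending only on these two invariants can decide $\cP_{g,p}$. Both graphs will come from the half-graph family of Definition \ref{half-graph}.

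First I would record the two invariants for the bipartite half-graph $H_n$ (with $n\geq 1$). Being bipartite, $H_n$ is $2$-colorable, so its chromatic number is $2$; being bipartite it is also triangle-free, so its maximum clique size is $2$. Next, fixing $n>6g-6+2p$, Corollary \ref{omits half-graphs} tells us that $H_n$ does not have $\cP_{g,p}$. For the contrasting graph I would take a single edge $K_2$: since we assume $3g+p\geq 5$, the surface $S$ carries a multicurve with (at least) two components, so $\cC(S)$ has a pair of adjacent vertices and $K_2$ is an induced subgraph of $\cC(S)$. Thus $K_2$ has $\cP_{g,p}$, yet $K_2$ also has chromatic number $2$ and maximum clique size $2$. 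Since $H_n$ and $K_2$ agree on both invariants but differ on $\cP_{g,p}$, the corollary follows.

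I do not expect any real obstacle here: the statement is a formal consequence of Corollary \ref{omits half-graphs} (which itself rests on Theorem \ref{induced subgraph thm} together with the bound $\NCL(H_n)\geq n$ from Example \ref{NCL example}) and the elementary fact that bipartite graphs are $2$-chromatic and triangle-free. The one point worth a sentence is to confirm that \emph{some} graph with these invariant values does embed in $\cC(S)$, and the single edge $K_2$ makes this immediate from $3g-3+p\geq 2$. If a less degenerate witness were desired one could try a small half-graph $H_m$ with $m\leq 6g-6+2p$ in place of $K_2$, but verifying such an embedding requires more than the present setup, so $K_2$ is the economical choice.
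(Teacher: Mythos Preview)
Your proposal is correct and follows essentially the same approach as the paper: the paper's proof is the one-line observation that $H_n$ is $2$-colorable and triangle-free, so Corollary~\ref{omits half-graphs} immediately yields the result. You have simply made explicit the contrasting witness ($K_2$) that the paper leaves implicit, which is a harmless elaboration.
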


In fact, a more dramatic illustration of Corollary \ref{obstructions1} can be made quantitatively.
By definition, $\cP_{g,p}$ is a \emph{hereditary graph property} (i.e. closed under isomorphism and induced subgraphs). 
Asymptotic enumeration of hereditary graph properties has been studied by many authors, resulting in a fairly precise description of possible ranges for growth rates \cite[Theorem 1]{bbw}. 
Combining Corollary \ref{omits half-graphs} with a result of Alon, Balogh, Bollob\'{a}s, and Morris \cite{abbm}, we obtain an upper bound on the asymptotic enumeration of $\cP_{g,p}$. 
The argument for the following is in \S\ref{definition}. Given $n>0$, let $\cP_{g,p}(n)$ denote the class of graphs with vertex set $[n]=\{1,\ldots,n\}$ satisfying $\cP_{g,p}$.

\begin{cor}\label{enumeration}
There is an $\epsilon>0$ such that, for large $n$, $|\cP_{g,p}(n)|\leq 2^{n^{2-\epsilon}}$.
\end{cor}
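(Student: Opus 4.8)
The plan is to combine the structural obstruction from Corollary \ref{omits half-graphs} with the enumeration theorem of Alon, Balogh, Bollob\'as, and Morris \cite{abbm}, which asserts roughly that a hereditary graph property whose speed is not $2^{o(n^2)}$ must nevertheless, if it omits some fixed bipartite graph, have speed bounded by $2^{n^{2-\epsilon}}$ for some $\epsilon>0$. More precisely, the relevant consequence is: if $\mathcal{Q}$ is a hereditary property of graphs that does not contain every bipartite graph, then there is an $\epsilon = \epsilon(\mathcal{Q})>0$ with $|\mathcal{Q}(n)| \le 2^{n^{2-\epsilon}}$ for all large $n$. So the task reduces to exhibiting a single bipartite graph that is not in $\cP_{g,p}$, together with checking that $\cP_{g,p}$ is genuinely hereditary so the black-box theorem applies.

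First I would record that $\cP_{g,p}$ is a hereditary graph property: it is closed under isomorphism because $\cC(S)$ is defined up to the action of the mapping class group, and it is closed under induced subgraphs essentially by definition, since an induced subgraph of an induced subgraph of $\cC(S)$ is again an induced subgraph of $\cC(S)$. Next, fix $n = 6g-6+2p+1$ and consider the bipartite half-graph $H_n$ of Definition \ref{half-graph}. By Example \ref{NCL example} we have $\NCL(H_n) \ge n > 6g-6+2p = \NCL(\cC(S))$ by Theorem \ref{induced subgraph thm}, and since $\NCL$ is monotone under induced subgraphs, $H_n$ cannot be an induced subgraph of $\cC(S)$; that is, $H_n$ does not have $\cP_{g,p}$ (this is exactly Corollary \ref{omits half-graphs}). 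Hence $\cP_{g,p}$ is a hereditary property that omits the fixed bipartite graph $H_n$, so in particular it does not contain all bipartite graphs.

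Finally I would invoke \cite{abbm}: since $\cP_{g,p}$ is hereditary and fails to contain the bipartite graph $H_n$, there exists $\epsilon = \epsilon(g,p) > 0$ such that $|\cP_{g,p}(n)| \le 2^{n^{2-\epsilon}}$ for all sufficiently large $n$, which is the claim. The main point requiring care is matching the precise hypotheses of the cited theorem — one wants the version of the Alon--Balogh--Bollob\'as--Morris result phrased for hereditary properties omitting a bipartite subgraph (as opposed to, say, the coloring-number formulation in terms of the speed being below the Bell number threshold), and verifying that "omits $H_n$ as an induced subgraph" is exactly the hypothesis that feeds their dichotomy. I expect this citation-alignment to be the only real obstacle; everything else is immediate from the results already established in the excerpt.
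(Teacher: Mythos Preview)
Your overall strategy is right, but the ``citation-alignment'' issue you flag is not cosmetic: the version of \cite{abbm} you want to invoke is false. Omitting a single bipartite graph as an induced subgraph does \emph{not} force a hereditary property to have speed $2^{n^{2-\epsilon}}$. The class of co-bipartite graphs (complements of bipartite graphs) is hereditary, omits $K_{3,3}$ as an induced subgraph (since any independent set in a co-bipartite graph has at most two vertices), and yet has speed $2^{(1/4+o(1))n^2}$. So ``$\cP_{g,p}$ omits the bipartite graph $H_n$'' is not a hypothesis that feeds any dichotomy yielding the desired bound.

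The actual hypothesis of Theorem~2 in \cite{abbm} is that $\cP_{g,p}$ omits, for some fixed $k$, the entire class $\mathcal{U}(k)$ of graphs on a vertex set $\{a_1,\ldots,a_k\}\cup\{b_J:J\subseteq[k]\}$ in which $e(a_i,b_J)$ holds iff $i\in J$, with the edges inside $\{a_i\}$ and inside $\{b_J\}$ arbitrary. The paper checks exactly this: any graph in $\mathcal{U}(k)$ contains an induced half-graph of height $k$ (take the $a_i$ together with the $b_{J_i}$ for $J_i=\{i,\ldots,k\}$), but this half-graph is in general \emph{not} the bipartite one $H_k$, since the edges among the $a_i$ and among the $b_{J_i}$ are uncontrolled. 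What saves the argument is that Corollary~\ref{omits half-graphs} excludes \emph{every} half-graph of height exceeding $6g-6+2p$, not just $H_n$; this is why Example~\ref{NCL example}(1) and the corollary are stated for arbitrary half-graphs. Your proof uses only the bipartite $H_n$, which is too weak. The fix is small once identified: replace ``$\cP_{g,p}$ omits $H_n$'' with ``$\cP_{g,p}$ omits every half-graph of height $k>6g-6+2p$, hence omits $\mathcal{U}(k)$,'' and then Theorem~2 of \cite{abbm} applies directly.
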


The set of graphs on $[n]$ satisfying the clique and chromatic tests for $\cP_{g,p}$ includes all $(3g+3-p)$-colorable graphs on $[n]$, and thus this set has size $2^{\Theta(n^2)}$.
In particular, the upper bound above cannot be obtained from the clique or chromatic number obstructions to $\cP_{g,p}$. 
This also strengthens the statement of Corollary \ref{obstructions1}: among the graphs on $[n]$ satisfying the clique and chromatic tests, the probability of possessing $\cP_{g,p}$ tends to $0$ as $n\to\infty$.

For \emph{monotone graph properties} (i.e.~closed under isomorphism and subgraphs), even more is known concerning asymptotic structure \cite{BBS}. However, we can use Corollary \ref{omits half-graphs} to show that in most cases, $\cP_{g,p}$ is not monotone.

\begin{cor}
\label{edge deletions}
If $3g+p\ge 6$ then $\cP_{g,p}$ is not a monotone graph property.
\end{cor}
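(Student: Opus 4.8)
The plan is to produce, for each $S=S_{g,p}$ with $3g+p\ge 6$, a finite graph $G$ with $\cP_{g,p}$ together with a subgraph $G'$ of $G$ (same vertex set, fewer edges) that fails $\cP_{g,p}$. By Corollary~\ref{omits half-graphs} it suffices to arrange that $G$ contains the bipartite half-graph $H_n$ of Definition~\ref{half-graph} as a (not necessarily induced) subgraph for some $n>6g-6+2p$: that copy of $H_n$ is then a subgraph of $G$ isomorphic to a half-graph of height $n>6g-6+2p$, hence lacks $\cP_{g,p}$ by Corollary~\ref{omits half-graphs}, while $\cP_{g,p}$ is isomorphism-invariant. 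Note that we do not need $H_n$ to be an \emph{induced} subgraph of $G$, nor do we use heredity of $\cP_{g,p}$.

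To build such a $G$ I would exhibit two disjoint essential subsurfaces of $S$, each carrying infinitely many curves. Pick a separating simple closed curve $\gamma\subset S$ whose two complementary components $\Sigma_1,\Sigma_2$ are each neither a disk, an annulus, nor a pair of pants --- the existence of $\gamma$ is the only place $3g+p\ge 6$ is used. Each $\Sigma_i$ then contains infinitely many isotopy classes of non-peripheral essential simple closed curves, all distinct as vertices of $\cC(S)$ by the standard injectivity of $\cC(\Sigma_i)\hookrightarrow\cC(S)$, with the two families disjoint in $\cC(S)$ since curves supported on opposite sides of $\gamma$ are non-isotopic in $S$. So, fixing $n>6g-6+2p$, choose such curves $a_1,\dots,a_n$ in $\Sigma_1$ and $b_1,\dots,b_n$ in $\Sigma_2$: these are $2n$ distinct vertices, and each $a_i$ can be realized disjointly from each $b_j$ (they lie on opposite sides of $\gamma$). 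Let $W=\{a_1,\dots,a_n,b_1,\dots,b_n\}$ and let $G$ be the induced subgraph of $\cC(S)$ on $W$, so $G$ has $\cP_{g,p}$ by definition. Every pair $\{a_i,b_j\}$ is an edge of $G$, so $G$ contains the complete bipartite graph on $(\{a_i\},\{b_j\})$; deleting the edges $\{a_i,b_j\}$ with $i<j$ leaves a copy of $H_n$, as required.

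It remains to produce $\gamma$, which I would do by a short case analysis on $(g,p)$: for $g=0$ one splits the punctures into two groups of size at least three, needing $p\ge 6$; for $g=1$ one cuts off a one-holed torus, leaving a sphere with $p$ punctures and one boundary circle, which fails to be a pair of pants exactly when $p\ge 3$; for $g\ge 2$ there is always room --- e.g.\ cut off a one-holed torus, whose complement has positive complexity (when $g=2,\,p=0$ the complement is again a one-holed torus). In each case the requirement reduces to $3g+p\ge 6$, and indeed $S_{1,2}$ and $S_{0,5}$ admit no such $\gamma$, so the hypothesis is sharp. I expect the main obstacle to be this bookkeeping --- the case analysis for $\gamma$ and the standard subsurface facts guaranteeing the $2n$ curves are genuinely distinct vertices of $\cC(S)$ --- rather than anything conceptual; the combinatorial heart of the statement is already contained in Corollary~\ref{omits half-graphs}.
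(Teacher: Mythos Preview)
Your argument is correct and takes the same route as the paper's: exhibit two disjoint subsurfaces each carrying infinitely many curves (the paper simply asserts their existence for $3g+p\ge 6$ without your case analysis), deduce that $K_{n,n}$ has $\cP_{g,p}$ for all $n$, and note that its subgraph $H_n$ does not by Corollary~\ref{omits half-graphs}. One small slip: to obtain exactly $H_n$ from your $G$ you must delete not only the cross edges $\{a_i,b_j\}$ with $i<j$ but also any edges among the $a_i$'s and among the $b_j$'s (curves on the same side of $\gamma$ may well be disjoint), though this is of course harmless for the conclusion.
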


\begin{proof}
If $3g+p\ge 6$ then $S$ contains a pair of disjoint incompressible subsurfaces that support essential nonperipheral simple closed curves. It follows that
the complete bipartite graph $K_{n,n}$ has property $\cP_{g,p}$ for all $n\in\mathbb{N}$. 
However, the half-graph $H_n$ is a subgraph of $K_{n,n}$.
\end{proof}

It is also worth observing that, for $3g+p\ge 6$, this result thwarts the possibility of using the Robertson-Seymour Graph Minor Theorem \cite{roberston-seymour} to characterize $\cP_{g,p}$ by a finite list of forbidden minors. 
Of course this would also require $\cP_{g,p}$ to be closed under edge contraction, which is already impossible just from the clique number restriction. 
Indeed, edge contraction of the graphs $K_{n,n}$ produces arbitrarily large complete graphs. 

\begin{remark}
\label{special cases 1}
Neither of the exceptional cases $\cP_{0,5}$ and $\cP_{1,2}$ are closed under edge contraction, as each contains a five-cycle but no four-cycles by Lemma \ref{multipartite obstructed}.
However, it remains unclear whether $\cP_{0,5}$ and $\cP_{1,2}$ are monotone graph properties; while Theorem \ref{induced subgraph thm} applies, complete bipartite graphs do not possess $\cP_{g,p}$ in these cases.
\end{remark}

Following Kim and Koberda, Question \ref{motivation} is closely related to the problem of which \emph{right-angled Artin groups} (RAAGs) embed in the mapping class group $\Mod(S)$ of $S$ \cite{koberda,kim-koberda1,kim-koberda2,kim-koberda3}. 
If the graph $G$ has $\cP_{g,p}$, then $A(G)$ is a RAAG subgroup of $\Mod(S)$ \cite[Theorem 1.1]{koberda}. 
The converse is false in general \cite[Theorem 3]{kim-koberda3}, but a related statement holds: if the RAAG $A(G)$ embeds as a subgroup of $\Mod(S)$, the graph $G$ is an induced subgraph of the clique graph $\cC^{cl}(S)$ \cite[Lemma 3.3]{kim-koberda1}.

By exploiting a construction by Erd\H os of graphs with arbitrarily large girth and chromatic number, Kim and Koberda produce `not very complicated' (precisely, cohomological dimension two) RAAGs that do not embed in $\Mod(S)$ \cite[Theorem 1.2]{kim-koberda1}.
Rephrasing Theorem \ref{induced subgraph thm} in this context, the graphs $H_n$ provide such examples which are `even less complicated'.

\begin{cor}
\label{RAAG subgroup restriction}
If $A(G)$ is a RAAG subgroup of $\Mod(S)$, then 
\[
\NCL(G)\le 6g-6+2p~.
\]
Moreover, for any $g$ and $p$ there exist bipartite graphs $G$ so that $A(G)$ does not embed in $\Mod(S)$.
\end{cor}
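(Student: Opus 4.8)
The plan is to deduce Corollary~\ref{RAAG subgroup restriction} directly from the pieces already assembled. First I would recall the quoted fact of Kim and Koberda \cite[Lemma 3.3]{kim-koberda1}: if $A(G)$ embeds as a subgroup of $\Mod(S)$, then $G$ is an induced subgraph of the clique graph $\cC^{cl}(S)$. Granting this, $G$ inherits property ``induced subgraph of $\cC^{cl}(S)$'', so by the monotonicity of $\NCL$ under passing to induced subgraphs (noted just before Corollary~\ref{omits half-graphs}) we get $\NCL(G)\le\NCL(\cC^{cl}(S))$. Theorem~\ref{induced subgraph thm} evaluates the right-hand side as $6g-6+2p$, which is exactly the claimed bound. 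This is essentially a one-line chain of inequalities once the inputs are in place.

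For the ``moreover'' clause I would produce the desired bipartite $G$ by combining the half-graph example with the first part of the corollary (in contrapositive form). Fix $g$ and $p$, set $n=6g-5+2p$ (so $n>6g-6+2p$), and take $G=H_n$, the bipartite half-graph of height $n$ from Definition~\ref{half-graph}. By Example~\ref{NCL example}, $\NCL(H_n)\ge n>6g-6+2p$. If $A(H_n)$ embedded in $\Mod(S)$, the first part of the corollary would force $\NCL(H_n)\le 6g-6+2p$, a contradiction; hence $A(H_n)$ does not embed in $\Mod(S)$. Since $H_n$ is bipartite, this is exactly the asserted family of examples. One could equally cite Corollary~\ref{omits half-graphs} together with the Kim--Koberda equivalence, but routing through the already-established first half of the present corollary keeps the argument self-contained.

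There is no serious obstacle here: the corollary is a formal consequence of Theorem~\ref{induced subgraph thm}, the monotonicity of $\NCL$, and the cited lemma of Kim and Koberda. The only point requiring a little care is making sure one uses the clique graph $\cC^{cl}(S)$ rather than $\cC(S)$ in the first step --- the RAAG embedding statement only yields an induced subgraph of the \emph{clique} graph, not of the curve graph itself --- which is why it is essential that Theorem~\ref{induced subgraph thm} computes $\NCL(\cC^{cl}(S))$ and not merely $\NCL(\cC(S))$. With that observation in hand the proof is short, and I would present it as such, perhaps a two-sentence argument for each half of the statement.
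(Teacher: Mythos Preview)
Your proposal is correct and matches the paper's own (implicit) argument: the paper does not give a separate proof of this corollary, but the paragraph immediately preceding it invokes exactly the Kim--Koberda lemma \cite[Lemma 3.3]{kim-koberda1} together with Theorem~\ref{induced subgraph thm}, and explicitly names the bipartite half-graphs $H_n$ as the examples for the ``moreover'' clause. Your care in noting that the clique graph $\cC^{cl}(S)$ (rather than $\cC(S)$) is what is needed is well placed and is precisely why the paper computes $\NCL$ for both graphs in Theorem~\ref{induced subgraph thm}.
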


The nested complexity length of a graph $G$ is closely related to the \emph{centralizer dimension} of $A(G)$, i.e.~the longest chain of nontrivially nested centralizers in the group (this algebraic invariant is discussed elsewhere in the literature \cite{myasnikov-shumyatsky}).
In particular, it is straightforward from the definitions that the centralizer dimension of $A(G)$ is at least $\NCL(G)$.
The possible centralizers of an element of a RAAG have been classified by Servatius \cite{servatius}, and the characterization there would seem to suggest that equality does not hold in general.
Of course, this is impossible to check in the absence of a method to compute nested complesity length, and Question \ref{NCL algorithm} arises naturally. 

Our next corollary concerns the model theoretic behavior of $\cC(S)$. 
We focus on \emph{stability}, one of the most important and well-developed notions in modern model theory. 
Given a first-order structure, stability of its theory implies an
abstract notion of independence and dimension for definable sets in that
structure (see Pillay \cite{pillay} for details).
Stability can also be treated as a combinatorial property obtained from half-graphs. 
Given an integer $k\geq 1$, a graph $G$ is \emph{$k$-edge stable} if it does not contain any half-graph of height $n\geq k$ as an induced subgraph. 
We can thus rephrase Corollary \ref{omits half-graphs}.

\begin{cor}
$\cC(S)$ is $k$-edge stable for $k=6g-5+2p$.
\end{cor}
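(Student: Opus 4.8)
The plan is to derive this corollary directly from Corollary \ref{omits half-graphs} by unwinding the definition of $k$-edge stability. Recall that a graph $G$ is $k$-edge stable precisely when it contains no induced half-graph of height $n$ for any $n \geq k$. So to prove that $\cC(S)$ is $k$-edge stable for $k = 6g-5+2p$, I must show that $\cC(S)$ contains no induced half-graph of height $n$ whenever $n \geq 6g-5+2p$, equivalently whenever $n > 6g-6+2p$.

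First I would fix such an $n$, i.e.\ $n > 6g-6+2p$, and suppose toward a contradiction that $G$ is a half-graph of height $n$ which is an induced subgraph of $\cC(S)$; that is, $G$ has property $\cP_{g,p}$. By Example \ref{NCL example}, a half-graph of height $n$ satisfies $\NCL(G) \geq n > 6g-6+2p$. On the other hand, since $G$ has $\cP_{g,p}$ it is an induced subgraph of $\cC(S)$, and because $\NCL$ is monotone under passing to induced subgraphs we get $\NCL(G) \leq \NCL(\cC(S)) = 6g-6+2p$ by Theorem \ref{induced subgraph thm}. This contradiction shows no such $G$ exists. (Equivalently one may simply cite Corollary \ref{omits half-graphs} verbatim: it states exactly that a half-graph of height $n$ with $n > 6g-6+2p$ fails $\cP_{g,p}$.) Since $n > 6g-6+2p$ is the same as $n \geq 6g-5+2p$, we conclude that $\cC(S)$ omits all induced half-graphs of height $\geq 6g-5+2p$, which is the definition of $k$-edge stability for $k = 6g-5+2p$.

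There is essentially no obstacle here: the statement is a pure reformulation. The only point requiring a moment's care is the translation between the inequality $n > 6g-6+2p$ appearing in Corollary \ref{omits half-graphs} and the threshold $k = 6g-5+2p$ in the definition of $k$-edge stability, together with noting that the definition quantifies over all heights $n \geq k$ (not just $n = k$), which is handled automatically by monotonicity of $\NCL$ since a half-graph of height $n$ contains induced half-graphs of all smaller heights as well. Thus the proof is a single sentence invoking Corollary \ref{omits half-graphs}.
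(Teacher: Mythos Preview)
Your proposal is correct and matches the paper's approach exactly: the paper presents this corollary as a direct rephrasing of Corollary~\ref{omits half-graphs}, with no additional argument. Your unpacking of the definition of $k$-edge stability and the translation between $n>6g-6+2p$ and $n\geq 6g-5+2p$ is precisely what is implicit in that one-line observation.
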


When considering the first-order theory of $\cC(S)$ in the language of graphs, this corollary implies that the edge formula (and thus any quantifier-free formula) is stable in the model theoretic sense \cite[Theorem 8.2.3]{te-zi}. 
Whether arbitrary formulas are stable remains an intriguing open question, and would likely require some understanding of quantifier elimination for the theory of $\cC(S)$ in some suitable expansion of the graph language. This aspect of the nature of curve graphs remains unexplored, and stability is only one among a host of natural questions about their first-order theories to pursue.

On the other hand, edge-stability of $\cC(S)$ alone has strong consequences for the structure of large finite subgraphs of $\cC(S)$, via Szemer\'{e}di's regularity lemma \cite{kom-sim,szemeredi}. 
In particular, Malliaris and Shelah show that if $G$ is a $k$-edge-stable graph, then the regularity lemma can be strengthened so that in Szemer\'{e}di partitions of large induced subgraphs of $G$, the bound on the number of pieces is significantly improved, there is no need for irregular pairs, and the density between each pair of pieces is within $\epsilon$ of $0$ or $1$~\cite{ma-sh}. 
Thus a consequence of our work is that the class of graphs with $\cP_{g,p}$ enjoys this stronger form of Szemer\'{e}di regularity.

Next we consider an explicit family of multipartite graphs.

\begin{definition}
\label{multipartite def}
Given integers $r,t>0$, let $K_r(t)$ denote the complete $r$-partite graph in which each piece of the partition has size $t$.
\end{definition}

In Example \ref{NCL example}, we show $\NCL(K_r(2))=2r$. Combined with the Erd\H{o}s-Stone Theorem, this implies a general relationship (Proposition \ref{density NCL prop}) between nested complexity length and the \emph{upper density} $\delta(G)$ of an infinite graph $G$, i.e. the supremum over real numbers $t$ such that $G$ contains arbitrarily large finite subgraphs of edge density $t$. In the case of the curve graph $\cC(S)$, we show in Lemma \ref{multipartite obstructed} that $K_r(t)$ is obstructed from having $\cP_{g,p}$ for large $r$. From this we obtain the following exact calculation of the upper density of the curve graph, which is proved in \S\ref{density}.

\begin{thm}
\label{density thm}
The upper density $\delta(\cC(S))$ is equal to $\displaystyle 1 - \frac{1}{g+\lfloor\frac{g+p}{2}\rfloor-1}$.
\end{thm}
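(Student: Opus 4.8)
The plan is to compute $\delta(\cC(S))$ by sandwiching it between a lower bound coming from an explicit multipartite subgraph and an upper bound coming from the obstruction to large $K_r(t)$. Recall that the Erd\H{o}s--Stone theorem identifies the upper density of a graph with a limit built from the largest complete multipartite subgraphs it contains: concretely, if $r^*$ is the largest $r$ such that $K_r(t)$ is an induced subgraph of $\cC(S)$ for every $t$, then $\delta(\cC(S)) = 1 - \frac{1}{r^*}$ (this is the content that Proposition \ref{density NCL prop}, via Erd\H{o}s--Stone, is set up to supply). So the theorem reduces to the purely topological claim that
\[
r^* = g + \left\lfloor \frac{g+p}{2}\right\rfloor - 1~.
\]

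For the lower bound, I would construct, for each $t$, an induced copy of $K_{r^*}(t)$ in $\cC(S)$ by decomposing $S$ along a maximal multicurve into $2g-2+p$ pairs of pants, and then grouping the pants-curves into $r^*$ families so that curves in different families are disjoint while within each family one can find $t$ pairwise-intersecting curves supported in a common complexity-one or complexity-three piece. The count $g + \lfloor (g+p)/2\rfloor - 1$ is exactly the number of ``independent places'' on $S$ where one can hide an arbitrarily large pairwise-intersecting family (e.g.~one-holed tori glued in pairs along separating curves, together with the handles), so the main combinatorial work is an explicit and somewhat fiddly subsurface bookkeeping: it will be cleanest to induct on the topological type, adding a handle (which contributes $1$ to $r^*$) or a pair of punctures (which contributes $1$ after every second puncture, hence the floor), with small base cases $\cP_{0,5}$, $\cP_{1,2}$ checked by hand.

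For the upper bound I would invoke Lemma \ref{multipartite obstructed}: $K_r(t)$ fails $\cP_{g,p}$ once $r$ is too large (with $t$, say, $=2$ already sufficing). Pairing this with Example \ref{NCL example}, where $\NCL(K_r(2)) = 2r$, and Theorem \ref{induced subgraph thm}, which bounds $\NCL(\cC(S)) = 6g-6+2p$, gives $2r \le 6g-6+2p$, i.e. $r \le 3g-3+p$ — but that bound is not sharp, so the real content is in the finer statement of Lemma \ref{multipartite obstructed} that pins $r^*$ down to $g + \lfloor (g+p)/2\rfloor - 1$. Assuming that lemma, the two bounds match and the theorem follows by the Erd\H{o}s--Stone identification.

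The main obstacle, as I see it, is the exact-value lower bound construction: it is easy to get some multipartite subgraph but delicate to realize the \emph{optimal} $r^*$, because one must simultaneously maximize the number of parts (which wants curves spread into many disjoint subsurfaces) and keep each part arbitrarily large (which wants each part confined to a small-complexity subsurface supporting infinitely many pairwise-intersecting curves). Balancing these forces is precisely where the floor function enters, and getting the gluing pattern right — especially reconciling it with the two exceptional low-complexity surfaces noted in Remark \ref{special cases 1} — is the step I expect to require the most care.
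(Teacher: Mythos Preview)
Your outline is essentially the paper's approach: both bounds come from Lemma~\ref{multipartite obstructed}, and Erd\H{o}s--Stone converts the upper bound on $r$ into an upper bound on density. Two points deserve sharpening.

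First, you misplace the difficulty. Lemma~\ref{multipartite obstructed} is an if-and-only-if, so it already contains the lower bound; there is no separate ``fiddly subsurface bookkeeping'' or induction needed in the proof of the theorem itself. And the lemma's own lower-bound construction is direct rather than inductive: simply exhibit $g$ disjoint one-holed tori and $\lfloor (g+p)/2\rfloor-1$ disjoint four-holed spheres (this is Lemma~\ref{max subsurfaces calculation}, proved by maximizing $T+F$ under the linear constraints $T\le g$ and $T+2F\le 2g+p-2$ coming from homology and Euler characteristic). Each such piece supports infinitely many pairwise-intersecting curves, giving an induced $K_\ell(t)$ for all $t$.

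Second, your Erd\H{o}s--Stone sentence hides a genuine step. Erd\H{o}s--Stone only yields $K_{\ell+1}(t)$ as a \emph{subgraph}, not an induced one, so it does not immediately contradict the maximality of $r^*$ defined via induced subgraphs. The bridge is the finite clique number of $\cC(S)$: inside a (non-induced) copy of $K_{\ell+1}(w+1)$ with $w=3g-3+p$, each part must contain two non-adjacent vertices, producing an \emph{induced} $K_{\ell+1}(2)$, which Lemma~\ref{multipartite obstructed} forbids. You gesture at Proposition~\ref{density NCL prop} for this, and indeed the argument is in its proof, but you should name the finite-clique-number step explicitly rather than fold it into ``the Erd\H{o}s--Stone identification''. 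The NCL detour you include is irrelevant here and can be dropped.
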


\begin{remark}
\label{special cases 3}
The exceptional cases $(g,p)=(0,5)$ and $(1,2)$ are again remarkable in that they imply $\delta(\cC(S))=0$ in Theorem \ref{density thm}. Thus any family of graphs $\{G_n\}$ with $\cP_{g,p}$ and $|V(G_n)|\to \infty$ in these exceptional cases must satisfy $|E(G_n)| = o(|V(G_n)|^2).$
\end{remark}

\begin{question}
\label{density for exceptions}
Given $(g,p)=(0,5)$ or $(1,2)$, does there exist $\epsilon>0$ such that, for any family of graphs $\{G_n\}$ with $\cP_{g,p}$ and $|V(G_n)|\to\infty$ one has $|E(G_n)| = O(|V(G_n)|^{2-\epsilon})$?
\end{question}

Finally, we use the analysis of $\cP_{g,p}$ for $K_r(t)$ to extend Corollary \ref{obstructions1}.

\begin{cor}
\label{obstructions2}
Chromatic number, maximal clique size, and nested complexity length are not sufficient to determine if a finite graph has $\cP_{g,p}$.
\end{cor}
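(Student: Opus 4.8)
The plan is to exhibit a single finite graph $G$ which satisfies all three necessary conditions for $\cP_{g,p}$ --- namely $\omega(G)\le 3g-3+p$, $\chi(G)\le\chi(\cC(S))$, and $\NCL(G)\le 6g-6+2p$ --- but which nevertheless fails to have $\cP_{g,p}$. Note that the half-graphs used for Corollary~\ref{obstructions1} will not serve here: by Example~\ref{NCL example} a half-graph of height $n$ fails the nested complexity length test once $n>6g-6+2p$. In view of Lemma~\ref{multipartite obstructed} and Theorem~\ref{density thm}, the natural choice is a complete multipartite graph $G=K_r(t)$ with $r=g+\lfloor\frac{g+p}{2}\rfloor$ and $t$ large.

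First I would check that $G$ passes the three tests. Its clique number and chromatic number both equal $r$, and a short case analysis shows $g+\lfloor\frac{g+p}{2}\rfloor\le 3g-3+p$ for every $(g,p)$ with $3g+p\ge 5$, with equality exactly when $3g+p\in\{5,6\}$. Hence $\omega(G)\le\omega(\cC(S))=3g-3+p$, and since $\chi(\cC(S))\ge\omega(\cC(S))$ we also get $\chi(G)\le\chi(\cC(S))$. For the nested complexity length, the identity $\NCL(K_r(2))=2r$ from Example~\ref{NCL example} should extend to $\NCL(K_r(t))=2r$ for all $t\ge 2$: the inequality $\NCL(K_r(t))\ge 2r$ is immediate from monotonicity since $K_r(2)$ is an induced subgraph, and for the reverse inequality one argues that the extra vertices in each part of $K_r(t)$ are twins and so cannot lengthen a nested complexity sequence beyond what already appears for $t=2$. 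By Theorem~\ref{induced subgraph thm} this yields $\NCL(G)=2r\le 6g-6+2p=\NCL(\cC(S))$.

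Then I would invoke the obstruction. Since $\delta(\cC(S))=1-\frac{1}{r-1}$ by Theorem~\ref{density thm} with $r=g+\lfloor\frac{g+p}{2}\rfloor$, while the edge density of $K_r(t)$ tends to $1-\frac1r>1-\frac{1}{r-1}$ as $t\to\infty$, the curve graph cannot contain $K_r(t)$ for all $t$; concretely, Lemma~\ref{multipartite obstructed} supplies a value $t_0$ for which $K_r(t_0)\notin\cP_{g,p}$. Taking $G=K_r(t_0)$ then completes the argument, since $G$ meets all three necessary conditions above yet lacks $\cP_{g,p}$.

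The one step requiring genuine care is the nested complexity length bound $\NCL(K_r(t))\le 2r$ for $t>2$, as Example~\ref{NCL example} records only the case $t=2$; the point is to confirm that cloning a vertex inside a part cannot produce a longer nested complexity sequence than already occurs for $t=2$. The remaining ingredients are routine: the elementary inequality $g+\lfloor\frac{g+p}{2}\rfloor\le 3g-3+p$, the values of the clique and chromatic numbers of $K_r(t)$, and the non-containment of large $K_r(t)$ coming from Theorem~\ref{density thm} and Lemma~\ref{multipartite obstructed}.
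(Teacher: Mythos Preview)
Your approach is correct and matches the paper's in spirit---both take $G=K_r(t)$ with $r=g+\lfloor\tfrac{g+p}{2}\rfloor$---but you have introduced an unnecessary detour. Lemma~\ref{multipartite obstructed} is not an existence statement: it says that for \emph{every} $t>1$, $K_r(t)$ has $\cP_{g,p}$ if and only if $r\le g+\lfloor\tfrac{g+p}{2}\rfloor-1$. Hence already $K_r(2)$ fails $\cP_{g,p}$ for your choice of $r$, and the paper simply takes $G=K_r(2)$. This eliminates the density argument via Theorem~\ref{density thm} and, more to the point, the step you flagged as ``requiring genuine care'': since $t=2$, the identity $\NCL(K_r(2))=2r$ from Example~\ref{NCL example} applies directly and there is nothing to extend. (For what it's worth, your claim $\NCL(K_r(t))=2r$ for all $t\ge 2$ is true: if three $b_i$'s lay in the same part $P$, the witness $a_{k-1}$ for the largest of them would have to lie in $P$ and equal both of the other two, a contradiction. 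But this is not needed.) Your verification that $r\le 3g-3+p$, with equality when $3g+p\in\{5,6\}$, is in fact slightly more careful than the paper's phrasing, which asserts strict inequality; either way $\omega(G)=\chi(G)=r\le 3g-3+p\le\chi(\cC(S))$ and $\NCL(G)=2r\le 6g-6+2p$, so the three tests are passed.
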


\begin{proof}
For $r=g+\lfloor\frac{g+p}{2}\rfloor$, the graph $K_r(2)$ does not have $\cP_{g,p}$ by Lemma \ref{multipartite obstructed}, but passes the clique, coloring, and NCL tests. Indeed the chromatic number of $\cC(S)$ is at least its clique number $3g-3+p$, which is greater than $r$ (the clique and chromatic number of $K_r(2)$). Moreover, 
by Example \ref{NCL example} and Theorem \ref{induced subgraph thm}, we have $\NCL(K_r(2))=2r \le \NCL(\cC(S))$.
\end{proof}

\section*{acknowledgements}

The authors thank Tarik Aougab, Ian Biringer, Josh Greene, Sang-Hyun Kim, and Thomas Koberda for helpful comments and discussions.


\section{Notation and conventions}
\label{sec: notation}

We briefly list definitions and notation relevant in this paper, with the notable exception of nested complexity length (found in \S\ref{definition}). For background and context see Farb and Margalit \cite{farb-margalit}.

A \emph{graph} $G$ consists of a set of vertices $V(G)$ and an edge set $E(G)$ which is a subset of unordered distinct pairs from $V(G)$; we denote the edge between vertices $v$ and $w$ by $e(v,w)\in E(G)$.
A subgraph $H \subset G$ is \emph{induced} if $v,w\in H$ and $e(v,w)\in E(G)$ implies that $e(v,w)\in E(H)$.
The \emph{closed neighborhood} of a vertex $v\in V(G)$ is the set of vertices 
\[N[v]=\{v\} \cup \{u\in V(G):e(u,v)\in E(G)\}.
\]

Given a graph $G$ the right-angled Artin group corresponding to $G$, denoted $A(G)$, is defined by the following group presentation: 
the generators of $A(G)$ are given by $V(G)$ and there is a commutation relation $vw=wv$ for every edge $e(v,w)\in E(G)$. 

Recall that we are concerned with the hyperbolizable surface $S$ of genus $g$ with $p$ punctures, so we assume that $2g+p>2$.
The \emph{mapping class group} of $S$, denoted $\Mod(S)$, is the group $\pi_0(\mathrm{Homeo}^+(S))$.

By a \emph{curve} we mean the isotopy class of an essential nonperipheral embedded loop on $S$, and we refer to the union of curves which can be made simultaneously disjoint as a \emph{multicurve}.
The \emph{curve graph} $\cC(S)$ is the graph consisting of a vertex for each curve on $S$, and so that a pair of curves span an edge when the curves can be realized disjointly.
The \emph{clique graph} $\cC^{cl}(S)$ of the curve graph is the graph obtained as follows: 
The vertices of $\cC^{cl}(S)$ correspond to cliques of $\cC(S)$ (i.e.~multicurves), and two cliques are joined by an edge when they are simultaneously contained in a maximal clique (i.e~can be realized disjointly). 
The curve graph is the subgraph of the clique graph induced by the one-cliques.

We strengthen the assumption on $g$ and $p$ above to $3g+p\ge 5$. When $(g,p)=(0,3)$ the curve graph as defined above has no vertices.
When $(g,p)=(0,4)$ or $(1,1)$, the curve graph has no edges, and the matter of deciding if a graph has $\cP_{g,p}$ in these cases is trivial. 
The common alteration of the definition of these curve graphs yields the \emph{Farey graph}. 
We make no comment on induced subgraphs of the Farey graph, though a comprehensive classification can be made.

Whenever we refer to a \emph{subsurface} $\Sigma \subset S$ we make the standing assumption that $\Sigma$ is a disjoint union of closed incompressible homotopically distinct subsurfaces with boundary of $S$, i.e. the inclusion $\Sigma \subset S$ induces an injection on the fundamental groups of components of $\Sigma$. 
We write $[\Sigma]$ for the isotopy class of $\Sigma$.

Given a subsurface $\Sigma \subset S$, we say that a curve $\gamma$ is \emph{supported on} $\Sigma$ if it has a representative which is either contained in an annular component of $\Sigma$, or is a nonperipheral curve in a (necessarily non-annular) component. 
The curve $\gamma$ is \emph{disjoint from} $\Sigma$ if it is has a representative disjoint from $\Sigma$.
We say that $\gamma$ is \emph{transverse} to $\Sigma$, written $\gamma \pitchfork \Sigma$, if it is not supported on $\Sigma$ and not disjoint from $\Sigma$.
A multicurve $\gamma$ is supported on (resp.~disjoint from) $\Sigma$ if each of its components is supported on (resp.~disjoint from) $\Sigma$, and $\gamma$ is transverse to $\Sigma$ if at least one of its components is.
Each of these above definitions applies directly for curves and isotopy classes of subsurfaces.

We note that the definitions above may be nonstandard, as they are made with our specific application in mind. 
For example, in our terminology the core of an annular component of $\Sigma$ is both supported on and disjoint from the subsurface $\Sigma$.

Given a pair of subsurfaces $\Sigma_1$ and $\Sigma_2$, we write $[\Sigma_1] \le [\Sigma_2]$ when every curve supported on $\Sigma_1$ is supported on $\Sigma_2$. 
If $[\Sigma_1] \le [\Sigma_2]$ and $[\Sigma_1] \ne [\Sigma_2]$ (denoted $[\Sigma_1]<[\Sigma_2]$), we say that $[\Sigma_1]$ and $[\Sigma_2]$ are \emph{nontrivially nested}.

Given a collection of curves $C$, we let $[\Sigma(C)]$ indicate the isotopy class of the minimal subsurface of $S$, with respect to the partial ordering just defined, that supports the curves in $C$. 
Concretely, a representative of $[\Sigma(C)]$ can be obtained by taking the
union of regular neighborhoods of the curves in $C$ (for suitably small
regular neighborhoods that depend on the realizations of the curves in $C$) and filling in contractible complementary components of the result with disks.

A set of curves $C$ supported on a subsurface $\Sigma$ \emph{fills} the subsurface if $[\Sigma(C)]=[\Sigma]$.
Concretely, $C$ fills $\Sigma$ if the core of each annular component of $\Sigma$ is in $C$, and if the complement in $\Sigma$ of a realization of the remaining curves consists of peripheral annuli and disks.

Given a subsurface $\Sigma$, we write $\xi(\Sigma)$ for the number of components of a maximal multicurve supported on $\Sigma$.


\section{Topological complexity of subsurfaces}
\label{lemmas}

Since the ambient surface $S$ is fixed throughout, it should not be surprising that any nontrivially nested chain of subsurfaces of $S$ has bounded length. 
We make this explicit below, using $\xi(S)$ to keep track of `how much' of the surface has been captured by subsurfaces from the chain.
In fact, this is not quite enough to notice nontrivial nesting of subsurfaces, as annuli can introduce complications. 
We keep track of this carefully in Lemma \ref{chain lemma} below.
Throughout this section, $\Sigma_1$ and $\Sigma_2$ refer to a pair of subsurfaces of $S$.

\begin{lemma}
\label{inclusion lemma}
If $[\Sigma_1] \le [\Sigma_2]$, then $\Sigma_1$ is isotopic to a subsurface of $\Sigma_2$.
\end{lemma}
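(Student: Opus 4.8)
The plan is to realize $\Sigma_1$ inside $\Sigma_2$ by thickening curves: for each component of $\Sigma_1$ I would exhibit a finite collection of curves filling that component, push the collection into $\Sigma_2$ using the hypothesis, and then take a regular neighborhood, which reconstructs the component up to isotopy. I would treat the components of $\Sigma_1$ one at a time. The annular components are immediate: if $A$ is an annular component of $\Sigma_1$ with core $c$, then $c$ is supported on $\Sigma_1$, hence on $\Sigma_2$ by hypothesis, so (unwinding the definition of ``supported'') $c$ has a representative that is either the core of an annular component of $\Sigma_2$ or a nonperipheral curve in a non-annular component of $\Sigma_2$; a sufficiently thin annular neighborhood of that representative lies in $\Sigma_2$ and is isotopic to $A$.

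Now let $\Sigma_1'$ be a non-annular component of $\Sigma_1$ that supports at least one curve (the remaining possibility, a thrice-punctured sphere, is degenerate and, as is implicit in the paper's usage, can be set aside). Fix a finite collection $C$ of curves, each supported on --- hence nonperipheral in --- $\Sigma_1'$, that fills it. Since $\Sigma_1'$ is not an annulus we have $|C|\geq 2$, and since $[\Sigma(C)]=[\Sigma_1']$ is connected, a minimal-position realization of $\bigcup C$ is connected, so any two members of $C$ are joined by a chain of members of $C$ in which consecutive terms essentially intersect. Each $\gamma\in C$ is supported on $\Sigma_1$, hence on $\Sigma_2$. But two essentially intersecting curves that are both supported on $\Sigma_2$ must both be nonperipheral curves in one common non-annular component of $\Sigma_2$: curves supported on distinct components of $\Sigma_2$, or on an annular component and a non-annular component, have disjoint representatives, and a nonperipheral curve of a non-annular component is supported on no other component. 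Chaining through $C$, it follows that all of $C$ consists of nonperipheral curves in a single non-annular component $\Sigma_2'\subset\Sigma_2$.

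Realize $C$ inside $\Sigma_2'$ in minimal position --- which, by $\pi_1$-injectivity of $\Sigma_2'$, agrees with minimal position in $S$. By the concrete description of $[\Sigma(C)]$ recalled in \S\ref{sec: notation}, a representative of $[\Sigma(C)]=[\Sigma_1']$ is obtained from a regular neighborhood of $\bigcup C$ by filling in its complementary disks. A thin neighborhood of $\bigcup C$ lies in $\Sigma_2'$. A complementary disk $D$ of $\bigcup C$ in $S$ must be disjoint from $\partial\Sigma_2'$: its boundary lies on $\bigcup C$, which we realized away from $\partial\Sigma_2'$, so $D\cap\partial\Sigma_2'$ consists of closed curves, each bounding a disk in $D$ and hence inessential in $S$ --- impossible since $\Sigma_2'$ is incompressible. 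Being disjoint from $\partial\Sigma_2'$ with $\partial D\subset\bigcup C\subset\Sigma_2'$, the disk $D$ lies in $\Sigma_2'$. Hence this representative of $[\Sigma_1']$ lies in $\Sigma_2$. For mutual compatibility of the isotopies, choose the filling collections $C_i$ inside the pairwise disjoint components $\Sigma_1^{(i)}$, so that they are pairwise disjoint; then they may be realized pairwise disjointly in $\Sigma_2$, and the thickening of one could meet another's only if some $\bigcup C_{i'}$ lay inside a complementary disk of $C_i$, which would force the nontrivial nesting $[\Sigma_1^{(i')}]\leq[\Sigma_1^{(i)}]$ of disjoint subsurfaces --- impossible once $\Sigma_1^{(i')}$ supports a curve, since a nonperipheral curve of $\Sigma_1^{(i')}$ cannot be simultaneously disjoint from and supported on $\Sigma_1^{(i)}$.

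The step I expect to require the most care is the bookkeeping of ``disjoint from'' versus ``supported on'' around annular pieces: under the paper's conventions a curve is both disjoint from and supported on its own annular neighborhood, so one cannot simply ``push $\Sigma_1$ off of $\partial\Sigma_2$'' and must instead route every curve of $C$ into a genuine non-annular component of $\Sigma_2$. The other delicate point is confirming that the complementary disks used to assemble $\Sigma(C)$ do not escape $\Sigma_2$, which is precisely where incompressibility of $\Sigma_2$ is used.
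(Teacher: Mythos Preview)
Your proof is correct and follows essentially the same approach as the paper: choose curves filling $\Sigma_1$, use the hypothesis to realize them in $\Sigma_2$, and recover $\Sigma_1$ as a regular neighborhood with contractible complementary regions filled in. The paper's proof is a terse four-sentence version of this, while you have carefully unpacked the component-by-component bookkeeping, the chaining argument placing all filling curves in a single non-annular component of $\Sigma_2$, the incompressibility argument keeping the complementary disks inside $\Sigma_2$, and the pair-of-pants caveat---all of which the paper leaves implicit.
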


The reader is cautioned that the converse is false: consider a one-holed torus subsurface $T$ and let $T'$ indicate the disjoint union of $T$ with an annulus isotopic to $\partial T$. 
Though $T'$ is isotopic to a subsurface of $T$, the curve $\partial T$ is supported on $T'$ but not on $T$. Thus $[T'] \not \le [T]$.

\begin{proof}
Choose a collection of curves $C$ filling $\Sigma_1$. 
Since $[\Sigma_1] \le [\Sigma_2]$, each curve in $C$ is supported on $\Sigma_2$. 
It follows that there are small enough regular neighborhoods of representatives of the curves in $C$ that are contained in $\Sigma_2$.
Filling in contractible components of the complement produces a subsurface isotopic to $\Sigma_1$ inside $\Sigma_2$.
\end{proof} 

\begin{lemma}
\label{equality lemma}
If $[\Sigma_1] \le [\Sigma_2]$ and $[\Sigma_2] \le [\Sigma_1]$, then $[\Sigma_1]=[\Sigma_2]$.
\end{lemma}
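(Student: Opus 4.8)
The plan is to deduce Lemma~\ref{equality lemma} from Lemma~\ref{inclusion lemma} by showing that mutual nesting forces mutual containment of representatives, and then that two subsurfaces each isotopic to a subsurface of the other must be isotopic. First I would invoke Lemma~\ref{inclusion lemma} twice: since $[\Sigma_1]\le[\Sigma_2]$, there is a representative $\Sigma_1'$ isotopic to $\Sigma_1$ with $\Sigma_1'\subseteq \Sigma_2$; symmetrically, since $[\Sigma_2]\le[\Sigma_1]$, there is $\Sigma_2'$ isotopic to $\Sigma_2$ with $\Sigma_2'\subseteq \Sigma_1$. The goal is to conclude $[\Sigma_1]=[\Sigma_2]$, i.e.\ the two isotopy classes coincide.

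The key step is a topological rigidity fact: if an incompressible subsurface is isotopic to a subsurface of another incompressible subsurface and vice versa, the two are isotopic. I would argue this via an Euler characteristic / complexity count. Because both inclusions are $\pi_1$-injective (our standing assumption on subsurfaces), composing the two inclusions $\Sigma_1' \hookrightarrow \Sigma_2 \hookrightarrow \Sigma_1$ (after the appropriate isotopy identifying $\Sigma_1'$ with $\Sigma_1$) gives a $\pi_1$-injective self-embedding of $\Sigma_1$ onto a subsurface. On each component this cannot decrease the absolute value of the Euler characteristic, and for it to be onto $\Sigma_1$ up to isotopy one needs equality $\chi(\Sigma_1)=\chi(\Sigma_2)$ componentwise (matching up components appropriately, including keeping track of annular components, which is exactly the subtlety the paper flags with the $T'$ example). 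Once the complexities agree, the inclusion $\Sigma_1'\subseteq\Sigma_2$ of a subsurface of full complexity into an incompressible surface must be an isotopy equivalence: the complement $\Sigma_2\setminus\Sigma_1'$ has zero Euler characteristic and is incompressible, hence consists only of annuli and, together with peripheral pieces, contributes nothing new, so $\xi(\Sigma_1)=\xi(\Sigma_2)$ and every curve supported on one is supported on the other. This gives $[\Sigma_1]\le[\Sigma_2]$ and $[\Sigma_2]\le[\Sigma_1]$ upgraded to $[\Sigma_1]=[\Sigma_2]$ by the definition of the partial order.

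Alternatively, and perhaps more cleanly, I would argue directly from the definition of $\le$: it suffices to show that the set of curves supported on $\Sigma_1$ equals that supported on $\Sigma_2$, which is immediate from the two hypotheses $[\Sigma_1]\le[\Sigma_2]$ and $[\Sigma_2]\le[\Sigma_1]$ as inclusions of sets of curves — so the real content is that equality of these curve sets implies equality of isotopy classes of subsurfaces. For that I would take a filling collection $C_1$ of curves for $\Sigma_1$; by hypothesis each is supported on $\Sigma_2$, and by Lemma~\ref{inclusion lemma} they fill a subsurface isotopic to $\Sigma_1$ inside $\Sigma_2$. Symmetrically a filling collection for $\Sigma_2$ fills a copy of $\Sigma_2$ inside $\Sigma_1$. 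Nesting $\Sigma_1' \subseteq \Sigma_2' \subseteq \Sigma_1$ with the outer two isotopic forces all three to be isotopic, again by an Euler characteristic argument on each component plus the observation that a properly embedded incompressible subsurface of the same complexity as the ambient incompressible surface fills it (no room for extra non-annular complementary pieces).

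The main obstacle I expect is the careful bookkeeping of annular components, precisely because (as the paper's $T'$ remark shows) a subsurface can be isotopic into another without their isotopy classes being comparable under $\le$; so I must make sure the Euler-characteristic/complexity comparison is done component-by-component and accounts for annuli matching annuli and the core curves they carry, rather than only comparing total complexity. Handling this correctly is what makes the argument more than a one-line consequence of Lemma~\ref{inclusion lemma}.
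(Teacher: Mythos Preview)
Your approach is genuinely different from the paper's and considerably more involved. The paper invokes Lemma~\ref{inclusion lemma} only once, fixing $\Sigma_1\subset\Sigma_2$, and then takes a filling collection $C$ for $\Sigma_2$ (not $\Sigma_1$). Since $[\Sigma_2]\le[\Sigma_1]$, every curve of $C$ can be realized inside $\Sigma_1$; because $C$ fills $\Sigma_2$, the complement of this realization in $\Sigma_2$ consists of disks and peripheral annuli, and hence so does its complement in the smaller $\Sigma_1\subset\Sigma_2$. Thus $C$ fills $\Sigma_1$ as well, and $[\Sigma_1]=[\Sigma(C)]=[\Sigma_2]$ in one stroke. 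No Euler characteristic comparison, no triple nesting, and---crucially---no separate bookkeeping for annular components is needed: the ``complement shrinks'' observation handles everything uniformly.

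Your route via mutual containment and $\chi$-counting is plausible but, as you yourself flag, incomplete as written. The step ``complement has zero Euler characteristic, hence consists of annuli, hence contributes nothing new'' is exactly where the $T'$ phenomenon bites: equal $\chi$ plus an annular complement does \emph{not} force isotopy, and converting both hypotheses into mere mutual containment discards precisely the information (about which peripheral curves are supported) needed to rule that out. You would have to go back to the original hypotheses at that point anyway, which is essentially what the paper does from the start. So your argument can likely be repaired, but the cleanest fix is to adopt the paper's asymmetry: embed once, then fill the \emph{larger} surface.
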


\begin{proof}
Fix representatives $\Sigma_1 \subset \Sigma_2$, using the previous lemma. 
Let $C$ be a collection of curves that fill $\Sigma_2$. 
Since $[\Sigma_2]\le [\Sigma_1]$ the curves in $C$ can be realized on $\Sigma_1$ so that they have regular neighborhoods contained in $\Sigma_1$. 
Their complement in $\Sigma_2$ is a collection of disks and peripheral annuli, so their complement in $\Sigma_1$ must also be a collection of disks and peripheral annuli. 
Hence $C$ fills $\Sigma_1$ and $[\Sigma_1] = [\Sigma(C)] = [\Sigma_2]$.
\end{proof}

\begin{lemma}
\label{strict inequality}
Suppose that $[\Sigma_1] < [\Sigma_2]$. Then one of the following holds:
\begin{enumerate}
\item
we have $\xi(\Sigma_1) < \xi(\Sigma_2)$, or
\item 
there exists a curve $\alpha$ which is the core of an annular component of $\Sigma_1$ but not disjoint from $\Sigma_2$.
\end{enumerate}
\end{lemma}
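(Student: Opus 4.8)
The plan is to establish the contrapositive of the implication ``(2) fails $\Rightarrow$ (1)'': assuming that no core of an annular component of $\Sigma_1$ fails to be disjoint from $\Sigma_2$, I will show $\xi(\Sigma_1)<\xi(\Sigma_2)$. By Lemma~\ref{inclusion lemma} we may realize $\Sigma_1$ as a subsurface of $\Sigma_2$. The key preliminary fact is that a curve which is simultaneously supported on and disjoint from a subsurface $\Sigma$ must be the core of one of the annular components of $\Sigma$; indeed, a curve that is nonperipheral in a non-annular component of $\Sigma$ cannot be isotoped off that component. Now if $\alpha$ is the core of an annular component of $\Sigma_1$, then $\alpha$ is supported on $\Sigma_1$, hence (as $[\Sigma_1]\le[\Sigma_2]$) supported on $\Sigma_2$; by hypothesis $\alpha$ is also disjoint from $\Sigma_2$, so by the preliminary fact $\alpha$ is the core of an annular component of $\Sigma_2$. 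Thus every annular component of $\Sigma_1$ is one of $\Sigma_2$, and after an isotopy we may take these to coincide: write $\Sigma_i=A_i\sqcup N_i$ with $A_i$ the annular part and $N_i$ the non-annular part, so that $A_1\subseteq A_2$, $N_1\subseteq N_2$, and $\xi(\Sigma_i)=|A_i|+\xi(N_i)$.

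If $A_1\subsetneq A_2$ we are done, since $|A_1|<|A_2|$ while $\xi(N_1)\le\xi(N_2)$ (extend a maximal multicurve of $N_1$ to one of $N_2$). Otherwise $A_1=A_2$, and one checks directly that $[\Sigma_1]<[\Sigma_2]$ then forces $[N_1]<[N_2]$: a curve witnessing $[\Sigma_1]<[\Sigma_2]$ cannot be the core of a component of $A_1$, so it is supported on $N_2$ but not on $N_1$. So the lemma reduces to the following statement: if $N_1\subseteq N_2$ are subsurfaces with no annular components and $[N_1]<[N_2]$, then $\xi(N_1)<\xi(N_2)$.

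To prove this, let $M$ be a pants decomposition of $N_1$ (so $|M|=\xi(N_1)$) and extend it to a maximal multicurve of $N_2$; if a new curve appears we are done, so suppose $M$ is already maximal on $N_2$, and aim for a contradiction with $[N_1]\ne[N_2]$. Then every component of $N_2\setminus M$ is a pair of pants, and since $M\subset\mathrm{int}(N_1)$ the complement $R=\overline{N_2\setminus N_1}$ is disjoint from $M$, hence is contained in this union of pants. Each pants $P$ of $N_2\setminus M$ is therefore either filled by a unique pants of $N_1\setminus M$ (so $R\cap P$ consists of peripheral collars), or disjoint from $N_1$, in which case $\partial P\subseteq\partial N_2$ (an $M$-curve on $\partial P$ would force $N_1$ to meet $P$), so $P$ is a pair-of-pants component of $N_2$, which we discard as it changes neither $\xi$ nor the partial order. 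After discarding, the pants of $N_1\setminus M$ and of $N_2\setminus M$ are in bijection, so $\chi(N_2)=\chi(N_1)$ and $\chi(R)=0$; as $N_1$ is incompressible, $R$ has no disk components, so $R$ is a union of annuli. An annulus of $R$ attached to $N_1$ along one boundary circle is a peripheral collar; one attached along both boundary circles has a core which is disjoint from $N_1$, not isotopic to any curve of $M$, and nonperipheral in $N_2$, so adjoining it to $M$ would contradict maximality. Hence $R$ is a union of peripheral collars and $N_1$ is isotopic to $N_2$ --- contradicting $[N_1]\ne[N_2]$.

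The main obstacle is the analysis of the last paragraph: showing that the complement $R$ decomposes as claimed, and in particular that the only ways a proper inclusion $N_1\subsetneq N_2$ can fail to increase $\xi$ (extra pair-of-pants components, peripheral collars) are exactly the ways invisible to the partial order. Some care is needed with degenerate configurations --- for example a handle annulus of $R$ whose core happens to be peripheral in $N_2$, or two components of $N_1$ lying in one component of $N_2$ --- where the extra multicurve curve should instead be extracted from a bridge annulus joining distinct pieces; in each such case one either lands in conclusion (1) directly or derives a contradiction from incompressibility. The two preliminary reductions and the monotonicity of $\xi$ under incompressible inclusion are routine.
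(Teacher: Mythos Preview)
Your argument is correct --- in fact the degenerate configurations you flag do not actually arise once one checks that every collar in $R\cap P$ runs from a $\partial N_1$-curve to a $\partial N_2$-curve (if a collar abutted an $M$-curve $m\in\partial P$, the $N_1$-pants on the $P$-side of $m$ would have to equal $Q$, forcing two boundary components of $Q$ to be parallel in $P$, which is impossible for an incompressible pants in a pants). So the hand-waving at the end can be removed rather than filled in.

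That said, your route is considerably longer than the paper's. The paper argues directly from a single witness curve $\gamma$ supported on $\Sigma_2$ and not on $\Sigma_1$ (such $\gamma$ exists by Lemma~\ref{equality lemma}). If $\gamma$ is the core of an annular component of $\Sigma_2$, or if $\gamma$ is disjoint from $\Sigma_1$, then adjoining $\gamma$ to a maximal multicurve on $\Sigma_1$ already shows $\xi(\Sigma_1)<\xi(\Sigma_2)$. Otherwise $\gamma$ is transverse to $\Sigma_1$; any boundary component $\alpha$ of $\Sigma_1$ that $\gamma$ meets essentially is then supported on $\Sigma_2$, so either adjoining $\alpha$ gives $\xi(\Sigma_1)<\xi(\Sigma_2)$, or $\alpha$ is already supported on $\Sigma_1$ --- necessarily as the core of an annular component --- and cannot be disjoint from $\Sigma_2$ since it meets $\gamma$. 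Your global decomposition into annular and non-annular parts followed by a pants analysis is the natural approach if one wants structural information about the inclusion $\Sigma_1\subset\Sigma_2$, but for the bare dichotomy of the lemma the paper's local argument from one curve is much quicker and avoids all the bookkeeping.
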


\begin{proof}
By Lemma \ref{equality lemma} we have $[\Sigma_2] \not \le [\Sigma_1]$. Thus there is a curve $\gamma$ supported on $\Sigma_2$ that is not supported on $\Sigma_1$.
If $\gamma$ were the core of an annular component of $\Sigma_2$, then $\xi(\Sigma_1)<\xi(\Sigma_2)$ would be immediate.
Likewise if $\gamma$ were disjoint from $\Sigma_1$ then $\xi(\Sigma_1)< \xi(\Sigma_2)$ again.

We are left with the case that $\gamma$ is transverse to $\Sigma_1$.
Choose a boundary component $\alpha$ of $\Sigma_1$ intersected essentially by $\gamma$. 
Evidently, this curve must be supported on $\Sigma_2$. 
Since $\alpha$ is supported on $\Sigma_2$, either one has $\xi(\Sigma_1)<\xi(\Sigma_2)$ or $\alpha$ is supported as well on $\Sigma_1$. 
In this case, $\alpha$ is the core of an annular component of $\Sigma_1$ intersected essentially by $\gamma$. 
If $\alpha$ could be made disjoint from $\Sigma_2$ then its intersection with $\gamma$ would be inessential, so we are done.
\end{proof}

\begin{lemma}
\label{chain lemma}
Suppose that $\emptyset\ne[\Sigma_1] < [\Sigma_2] < \ldots < [\Sigma_n]$ is a nontrivially nested chain of subsurfaces of $S$. Then $n \le 6g-6+2p$.
\end{lemma}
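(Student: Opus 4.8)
The plan is to attach to every subsurface $\Sigma\subseteq S$ the integer
\[
c(\Sigma)\;=\;2\,\xi(\Sigma)-a(\Sigma),
\]
where $a(\Sigma)$ is the number of annular components of $\Sigma$, and to prove that $c$ is strictly increasing along a nontrivially nested chain while always $1\le c(\Sigma)\le 6g-6+2p$. Granting this, $c(\Sigma_1)<c(\Sigma_2)<\dots<c(\Sigma_n)$ is a strictly increasing sequence of integers in $[1,6g-6+2p]$, so $n\le 6g-6+2p$. The two bounds on $c$ are immediate: the cores of the annular components of $\Sigma$ form a multicurve supported on $\Sigma$, so $a(\Sigma)\le\xi(\Sigma)$ and hence $c(\Sigma)\ge\xi(\Sigma)$; since $\Sigma_1\ne\emptyset$ it supports a curve (each component is an annulus, supporting its core, or is filled by a collection of curves in the sense of \S\ref{sec: notation}, hence carries a curve), so $\xi(\Sigma_1)\ge 1$ and $c(\Sigma_1)\ge 1$. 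At the other end, every multicurve supported on $\Sigma$ is supported on $S$, so $\xi$ is monotone under $\le$ and $\xi(\Sigma)\le\xi(S)=3g-3+p$; with $a(\Sigma)\ge 0$ this gives $c(\Sigma)\le 2\xi(S)=6g-6+2p$.

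The core of the argument is a counting inequality at each step. Using Lemma \ref{inclusion lemma}, fix representatives with $\Sigma_i\subseteq\Sigma_{i+1}$, and let $M$ be a maximal multicurve supported on $\Sigma_i$, so $|M|=\xi(\Sigma_i)$; maximality forces $M$ to contain the $a(\Sigma_i)$ cores of the annular components of $\Sigma_i$, and every remaining curve of $M$ is, by the definition of ``supported'', nonperipheral in a non-annular component of $\Sigma_i$. Let $M^{\circ}\subseteq M$ be that set of remaining curves, so $|M^{\circ}|=\xi(\Sigma_i)-a(\Sigma_i)$, and let $\alpha_1,\dots,\alpha_{a(\Sigma_{i+1})}$ be the cores of the annular components of $\Sigma_{i+1}$. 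Each $\alpha_j$ is disjoint from $\Sigma_{i+1}\supseteq\Sigma_i$, hence disjoint from $M$, and no $\alpha_j$ is isotopic to a curve of $M^{\circ}$: such a curve is nonperipheral in some non-annular component $C$ of $\Sigma_i$, and (a routine fact) is then not isotopic to any curve disjoint from $C$, whereas $\alpha_j$ is disjoint from $C\subseteq\Sigma_{i+1}$. Thus $M^{\circ}\cup\{\alpha_1,\dots,\alpha_{a(\Sigma_{i+1})}\}$ is a multicurve, every curve of which is supported on $\Sigma_{i+1}$, whence
\[
\big(\xi(\Sigma_i)-a(\Sigma_i)\big)+a(\Sigma_{i+1})\;\le\;\xi(\Sigma_{i+1}),
\]
i.e. $a(\Sigma_{i+1})-a(\Sigma_i)\le\xi(\Sigma_{i+1})-\xi(\Sigma_i)$. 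Therefore $c(\Sigma_{i+1})-c(\Sigma_i)=2(\xi(\Sigma_{i+1})-\xi(\Sigma_i))-(a(\Sigma_{i+1})-a(\Sigma_i))\ge\xi(\Sigma_{i+1})-\xi(\Sigma_i)\ge 0$, with strict inequality whenever $\xi(\Sigma_{i+1})>\xi(\Sigma_i)$.

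It remains to handle the case $\xi(\Sigma_{i+1})=\xi(\Sigma_i)$, and here I would invoke Lemma \ref{strict inequality}, which supplies a curve $\beta$ that is the core of an annular component of $\Sigma_i$ but is not disjoint from $\Sigma_{i+1}$. Then $\beta$ is disjoint from $\Sigma_i$, hence from $M^{\circ}$ and from each $\alpha_j$, and is isotopic to none of them — to no $\alpha_j$ since, unlike the $\alpha_j$, it fails to be disjoint from $\Sigma_{i+1}$, and to no curve of $M^{\circ}$ by the same routine fact, $\beta$ being disjoint from the relevant $C$. So $M^{\circ}\cup\{\alpha_1,\dots,\alpha_{a(\Sigma_{i+1})},\beta\}$ is a multicurve supported on $\Sigma_{i+1}$, forcing $(\xi(\Sigma_i)-a(\Sigma_i))+a(\Sigma_{i+1})+1\le\xi(\Sigma_{i+1})=\xi(\Sigma_i)$, i.e. $a(\Sigma_i)-a(\Sigma_{i+1})\ge 1$, so $c(\Sigma_{i+1})-c(\Sigma_i)=a(\Sigma_i)-a(\Sigma_{i+1})\ge 1$. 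Combined with the previous paragraph this establishes strict monotonicity of $c$, and hence $6g-6+2p\ge c(\Sigma_n)\ge c(\Sigma_1)+(n-1)\ge n$. The bound is sharp, as one sees by alternately enlarging a subsurface by a boundary annulus and then by a one-holed torus along a pants decomposition of $S$; this also confirms that $2\xi-a$ is the right potential. The points needing genuine (if routine) care are the topological fact that a curve nonperipheral in a non-annular subsurface $C$ has no representative disjoint from $C$ — an innermost-circle argument using incompressibility of $C$ — and pinning down the constant exactly rather than losing a $+1$, which is why the estimate $c(\Sigma_1)\ge 1$ (and hence the exclusion of pair-of-pants components implicit in \S\ref{sec: notation}) matters.
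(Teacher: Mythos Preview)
Your proof is correct and hinges on the same dichotomy (Lemma~\ref{strict inequality}) as the paper's, but the packaging differs. The paper tracks two quantities along the chain: the complexities $c_k=\xi(\Sigma_k)$, and a global multicurve $\omega$ assembled by collecting, at each step where $\xi$ fails to increase, the annular core supplied by Lemma~\ref{strict inequality}; the bound follows from $c_n-c_1\le\xi(S)-1$ together with $|\omega|\le\xi(S)$. You instead define a single potential $c(\Sigma)=2\xi(\Sigma)-a(\Sigma)$ and show it strictly increases at each step, arguing locally via the multicurve $M^{\circ}\cup\{\alpha_j\}$ (augmented by $\beta$ when $\xi$ stays constant). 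This is a clean closed-form bookkeeping of the paper's two counters; it makes the invariant explicit and keeps the argument step-local rather than accumulating data across the whole chain. One minor expository slip: in the $\xi(\Sigma_{i+1})=\xi(\Sigma_i)$ case, the disjointness of $\beta$ from each $\alpha_j$ is not literally a consequence of $\beta$ being disjoint from $\Sigma_i$; it follows because $\beta\in M$ and you already established that each $\alpha_j$ is disjoint from $M$ (equivalently, because $\beta$, failing to be disjoint from $\Sigma_{i+1}$, must be supported in a non-annular component of $\Sigma_{i+1}$).
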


\begin{proof}
Choose $k$ with $1\le k \le n-1$, and let $c_k = \xi(\Sigma_k)$. 
We construct, inductively on $k$, a (possibly empty) multicurve $\omega$. 
Evidently, we have $[\Sigma_1] \le [\Sigma_2]$, so that by Lemma \ref{strict inequality} either $c_1 < c_2$, or there is a curve $\alpha_1$ isotopic to the core of an annular component of $\Sigma_1$ but not disjoint from $\Sigma_2$.
In the second case, add $\alpha_1$ to $\omega$.
Note that in the latter case there is a representative for $\alpha_1$ which is contained in $\Sigma_1$.

We continue inductively.
Since we have $[\Sigma_k] < [\Sigma_{k+1}]$, Lemma \ref{strict inequality} guarantees that either $c_k < c_{k+1}$, or there is a curve $\alpha_k$ isotopic to the core on an annular component of $\Sigma_k$ but not disjoint from $\Sigma_{k+1}$.
Suppose that, for some $i<k$, the curve $\alpha_i$ is another component of $\omega$ supported on $\Sigma_i$.
Since $\alpha_k$ has a representative disjoint from $\Sigma_k$, and $[\Sigma_i] \le [\Sigma_k]$, $\alpha_k$ has a representative disjoint from $\Sigma_i$. 
Moreover, because $\alpha_i$ cannot be made disjoint from $\Sigma_{i+1}$ but $\alpha_k$ can be (since $[\Sigma_{i+1}] \le [\Sigma_k]$), the curves $\alpha_i$ and $\alpha_k$ are not isotopic.
It follows that the curve $\alpha_k$ may be added to the multicurve $\omega$ so that $\omega$ remains a collection of disjoint curves, and so that its number of components increases by one.

At each step of the chain $[\Sigma_1] < [\Sigma_2] < \ldots < [\Sigma_n]$, either $c_k$ strictly increases, or $\omega$ gains another component.
Since $[\Sigma_1]\ne\emptyset$ and $[\Sigma_n] \le [S]$, we have $1\le c_1$ and $c_n \le \xi(S)$.
The number of components of $\omega$ is also at most $\xi(S)$, so we conclude that
\[
n \le 2\xi(S) = 6g-6+2p.\qedhere
\] \end{proof}

Finally, we will make use of a straightforward certificate that $[\Sigma_1] < [\Sigma_2]$.

\begin{lemma}
\label{certificate}
Suppose that $[\Sigma_1] \le [\Sigma_2]$. If $\alpha$ is a curve disjoint from $\Sigma_1$, $\beta$ is a curve supported on $\Sigma_2$, and $\alpha$ and $\beta$ intersect essentially, then $[\Sigma_1] < [\Sigma_2]$.
\end{lemma}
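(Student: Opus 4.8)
The plan is to prove strictness by contradiction, peeling off the reverse inequality with Lemma \ref{equality lemma}. We already have $[\Sigma_1]\le[\Sigma_2]$ in hand, so to get $[\Sigma_1]<[\Sigma_2]$ it suffices to rule out $[\Sigma_1]=[\Sigma_2]$. I would therefore assume $[\Sigma_1]=[\Sigma_2]$ and derive a contradiction with the hypothesis that $\alpha$ and $\beta$ intersect essentially.

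The key step is to produce a common geometric realization of $\alpha$ and $\beta$ witnessing disjointness. From $[\Sigma_1]=[\Sigma_2]$ we have in particular $[\Sigma_2]\le[\Sigma_1]$, so by the definition of the partial order every curve supported on $\Sigma_2$ is supported on $\Sigma_1$. Applying this to $\beta$, which is supported on $\Sigma_2$ by hypothesis, gives that $\beta$ is supported on $\Sigma_1$; fix a representative of $\beta$ that is either the core of an annular component of $\Sigma_1$ or a nonperipheral curve in a non-annular component, so that in either case this representative lies inside a chosen representative of $\Sigma_1$. Meanwhile $\alpha$ is disjoint from $\Sigma_1$, so it has a representative contained in the complement of that representative of $\Sigma_1$ in $S$.

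These two representatives of $\alpha$ and $\beta$ are then disjoint, so the geometric intersection number of $\alpha$ and $\beta$ is zero, contradicting the assumption that they intersect essentially. Hence $[\Sigma_1]\ne[\Sigma_2]$, and together with $[\Sigma_1]\le[\Sigma_2]$ this yields $[\Sigma_1]<[\Sigma_2]$, as desired.

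I do not anticipate a genuine obstacle here; the statement is essentially a bookkeeping consequence of the definitions of ``supported on'' and ``disjoint from'' together with Lemma \ref{equality lemma}. The only point meriting a line of care is that the ``supported'' representative of $\beta$ and the ``disjoint'' representative of $\alpha$ may be chosen simultaneously, which is automatic because they are confined to the disjoint regions $\Sigma_1$ and $S\setminus\Sigma_1$.
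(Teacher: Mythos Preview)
Your argument is correct and is essentially the same as the paper's: both show that if $\beta$ were supported on $\Sigma_1$ then disjoint representatives of $\alpha$ and $\beta$ could be chosen in $S\setminus\Sigma_1$ and $\Sigma_1$, contradicting essential intersection. The paper phrases this as directly exhibiting $\beta$ as a curve supported on $\Sigma_2$ but not on $\Sigma_1$ (hence $[\Sigma_1]\ne[\Sigma_2]$), while you package it as a contradiction from assuming $[\Sigma_1]=[\Sigma_2]$; note that your invocation of Lemma~\ref{equality lemma} is superfluous, since $[\Sigma_1]=[\Sigma_2]$ already gives $[\Sigma_2]\le[\Sigma_1]$ by reflexivity.
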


\begin{proof}
Because $\alpha$ is disjoint from $\Sigma_1$, it has a representative disjoint form $\Sigma_1$. If $\beta$ were supported on $\Sigma_1$, it would have a representative contained in $\Sigma_1$, contradicting the assumption that $\alpha$ and $\beta$ intersect essentially. Thus $\beta$ is a curve supported on $\Sigma_2$ but not supported on $\Sigma_1$.
\end{proof}


\section{The nested complexity length of a graph}
\label{definition}

The topological hypotheses of Lemma \ref{certificate} suggest a natural combinatorial parallel, which we capture in the definition of nested complexity length.

\begin{definition}
\label{nes com def}
Let $G$ be a graph.
\begin{enumerate}[(1)]
\item Given $b_1,\ldots,b_n\in V(G)$, we say that $(b_1,\ldots,b_n)$ is a \emph{nested complexity sequence} for $G$ if for each $1\le k\le n-1$ there is $a_k\in V(G)$ such that $b_1,\ldots,b_k\in N[a_k]\not\ni b_{k+1}$.
\item The \emph{nested complexity length of $G$}, denoted $\NCL(G)$, is given by
\[
\NCL(G):=\sup \left\{ n \ | \ (b_1,\ldots,b_n) \text{ is a nested complexity sequence for } G\right \}.
\]
\end{enumerate}
\end{definition}

\begin{figure}[H]
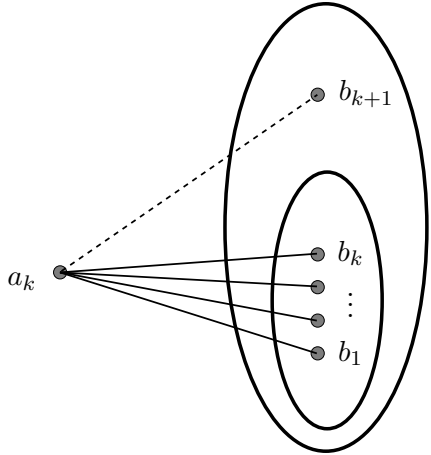

	\begin{lpic}[clean]{nestedComplexity(,6cm)}
		\lbl[]{-10,55;$a_k$}
		\lbl[]{100,115;$b_{k+1}$}
		\lbl[]{95,64;$b_k$}
		\lbl[]{95,50;$\vdots$}
		\lbl[]{95,32;$b_1$}
	\end{lpic}
	\caption{A graph acquires some nested complexity. Dotted lines indicate edges that are necessarily absent.}
	\label{nested complexity}
\end{figure}

Note that in the definition of a nested complexity sequence, $a_k$ may be equal to $b_i$, for $1\le i \le k$; see Figure \ref{nested complexity} for a schematic in which this is not the case. To highlight this subtlety, and as a first step toward an answer to Question \ref{NCL algorithm}, we prove an upper bound for $\NCL(G)$ in terms of a maximal complete bipartite subgraph of $G$.

\begin{prop}\label{bp-upper-bound}
Let $G$ be a graph. Fix $m,n>0$ and suppose $G$ does not contain a subgraph isomorphic to $K_{m,n}$. Then $\NCL(G)\le 2^{m+n+1}-2$. 
\end{prop}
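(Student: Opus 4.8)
The plan is to bound the length of a nested complexity sequence $(b_1, \dots, b_n)$ by analyzing the structure of the witnessing vertices $a_1, \dots, a_{n-1}$. The key tension, highlighted right before the proposition, is that an $a_k$ may coincide with some $b_i$; if we could always choose the $a_k$ to be "fresh" vertices distinct from all the $b_i$, we would get a complete bipartite subgraph directly: the $a_k$'s on one side, the $b_i$'s they dominate on the other, with $b_1, \dots, b_k \in N[a_k]$. The exponential bound suggests a recursive argument in which we split the sequence according to whether each $a_k$ lies "below" (among $\{b_1, \dots, b_k\}$) or "above", and recurse on the problematic part.

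Concretely, here is the approach I would carry out. Suppose $(b_1, \dots, b_n)$ is a nested complexity sequence with witnesses $a_k$ for $1 \le k \le n-1$. For each $k$, the condition $b_1, \dots, b_k \in N[a_k] \not\ni b_{k+1}$ means: if $a_k \notin \{b_1, \dots, b_k\}$, then $a_k$ together with $\{b_1, \dots, b_k\}$ forms (part of) a large complete bipartite configuration — edges from $a_k$ to each $b_i$, $i \le k$. So if $a_k \notin \{b_1, \dots, b_k\}$ for $n$ many values of $k$ forming an increasing run, we'd want $n$ small. The real work is in handling the indices $k$ where $a_k = b_{j(k)}$ for some $j(k) \le k$. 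I would define, for a threshold index $\ell$, the set of "high" witnesses (those $a_k$ with $a_k \in \{b_{\ell+1}, \dots, b_k\}$ or $a_k \notin \{b_1,\dots,b_k\}$) versus "low" witnesses ($a_k \in \{b_1, \dots, b_\ell\}$), and try to show one of these classes is large and gives either a $K_{m,n}$ directly or a shorter nested complexity sequence to which induction applies.

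The cleanest version I would aim for: proceed by induction on $m + n$, or more likely set up a single recursive bound $f(m,n)$ with $f$ satisfying $f(m,n) \le 2 f(m, n-1) + O(1)$ or similar, which solves to the exponential $2^{m+n+1} - 2$. The inductive step: given a long nested complexity sequence, look at the witness $a_{n-1}$ for the last step. Either $a_{n-1}$ is fresh, in which case $\{a_{n-1}\} \cup \{b_1, \dots, b_{n-1}\}$ is a star forcing comparison with the $K_{m,n}$-freeness (if $n - 1 \ge m$ we can keep peeling, building one side of a bipartite graph), or $a_{n-1} = b_j$ for some $j$; in the latter case, since $b_j \in N[a_{n-1}] = N[b_j]$ trivially but $b_n \notin N[b_j]$, we learn $e(b_j, b_n) \notin E(G)$, and crucially the subsequence $(b_1, \dots, b_{j-1}, b_{j+1}, \dots, b_n)$ or a truncation should still carry nested complexity structure, letting us drop a vertex and recurse. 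Counting how many vertices must be dropped before the "fresh witness" case is forced will produce the exponential.

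The main obstacle, I expect, is verifying that after deleting a vertex $b_j$ (equal to some witness $a_k$) the remaining vertices still form a valid nested complexity sequence — the witnesses for later steps referenced the full prefix including $b_j$, so one must check that $N[a_{k'}]$ still contains the truncated prefix and excludes the next term, which it does since we are only removing elements, but one has to be careful that the excluded term $b_{k'+1}$ was not itself $b_j$. Managing this bookkeeping — precisely which indices can be safely removed, and ensuring the recursion parameters $(m,n)$ genuinely decrease in a way that sums to the claimed bound — is the delicate part, and is presumably where the slightly lossy constant $2^{m+n+1} - 2$ (rather than something polynomial) comes from.
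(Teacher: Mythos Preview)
Your proposal correctly isolates the central dichotomy --- a witness $a_k$ is either ``fresh'' (not among $b_1,\ldots,b_k$) or equals some earlier $b_j$ --- and you are right that once one finds $m+n$ fresh witnesses at increasing indices, the corresponding $b$'s and $a$'s assemble into a (not necessarily induced) copy of $K_{m,n}$. That reduction is exactly the backbone of the paper's argument as well.

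The gap is in the recursive mechanism you propose for handling the non-fresh case. Deleting $b_j$ when $a_{N-1}=b_j$ does in fact leave a valid nested complexity sequence (your worry here is resolvable: the old $a_j$ serves as the new witness at position $j-1$), but this operation merely shortens the sequence by one; it does \emph{not} decrease either parameter of the forbidden $K_{m,n}$. So the recursion $f(m,n)\le 2f(m,n-1)+O(1)$ you aim for is not actually established by the case split you describe --- nothing in ``$a_{N-1}=b_j$, delete $b_j$'' moves you from a $K_{m,n}$-free hypothesis to a $K_{m,n-1}$-free one. As written, the non-fresh branch could fire repeatedly, shrinking the sequence to nothing without ever producing a fresh witness, and you have no bound.

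What is missing is an argument that non-fresh witnesses cannot persist for too long, and this is where the paper's proof supplies an idea absent from your sketch. The paper shows: if every witness $a_k,a_{k+1},\ldots,a_{2k}$ is non-fresh, say $a_t=b_{j(t)}$, then the back-reference indices $j(k),\ldots,j(2k)$ are forced to be $k+1$ pairwise distinct elements of $\{1,\ldots,k\}$, a contradiction. The mechanism is that $a_t=b_{j(t)}$ forces $b_{t+1}\notin N[b_{j(t)}]$, so $b_{t+1}$ and $b_{j(t)}$ are a non-adjacent pair both lying in $N[a_{t'}]$ for any later $t'$; hence $a_{t'}$ cannot equal either of them. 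This pigeonhole step guarantees a fresh witness appears by index $2k$ whenever one appeared by index $k$, and iterating gives $m+n$ fresh witnesses within the first $2^{m+n+1}-2$ positions. Your outline gestures at ``counting how many vertices must be dropped before the fresh witness case is forced,'' but this counting is precisely the heart of the matter, and the doubling/pigeonhole argument that accomplishes it is not something your delete-and-recurse framework produces on its own.
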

\begin{proof}
Given $k>0$, let $s_k=2^{k+1}-2$. Note that $s_k=2(s_{k-1}+1)$. Set $N=2^{m+n+1}-1=s_{m+n}+1$. For a contradiction, suppose $(b_1,\ldots,b_N)$ is a nested complexity sequence for $G$, witnessed by $a_1,\ldots,a_{N-1}$. 

We inductively produce values $j_1<\ldots<j_{m+n}< N$ below, such that for all $1\le r\le m+n$, $j_r\le s_r$ and $a_{j_r}\neq b_i$ for all $i\le j_r$. 
With these indices chosen, the set $\{b_{j_1},\ldots,b_{j_m}\}\cup\{a_{j_{m+1}}\ldots,a_{j_{m+n}}\}\subseteq V(G)$ produces a (not necessarily induced) subgraph of $G$ isomorphic to $K_{m,n}$, a contradiction.

If $a_1\neq b_1$ then we set $j_1=1$. Otherwise, if $a_1=b_1$ then $b_1,b_2$ are independent vertices in $N[a_2]$. So $a_2\not\in \{b_1,b_2\}$, and we set $j_1=2$. 
 
Fix $1\leq r<m+n$, and suppose we have defined $j_r$ as above. If $a_{j_r+1}\neq b_i$ for all $i\le j_r+1$ then we let $j_{r+1}=j_r+1$. Otherwise, let $k=j_r+1$ and suppose $a_k=b_{i_k}$ for some $i_k\le k$. We will find $t$ such that $k<t\le 2k$ and $a_t\neq b_i$ for any $i\le t$. By induction, $2k\le 2(s_r+1)=s_{r+1}<N$, and so setting $j_{r+1}=t$ finishes the inductive step of the construction.
 
Suppose no such $t$ exists. Then for all $k\le t\le 2k$, we have $a_t=b_{i_t}$ for some $i_t\le t$. Fix $1\le s\le k$. For any $0\le u< s$, we have $a_{k+u}=b_{j_{k+u}}$, and so $b_{k+u+1}\not\in N[b_{j_{k+u}}]$. Therefore, for any $0\le u< s$, $b_{k+u+1}$ and $b_{j_{k+u}}$ are independent vertices in $N[a_{k+s}]$, which means $a_{k+s}\not\in\{b_{k+u+1},b_{j_{k+u}}\}$. In other words, we have shown that for all $1\le s\le k$,
 \[
 j_{k+s}\not\in\{k+1,\ldots,k+s,j_k,j_{k+1},\ldots,j_{k+s-1}\}.
 \]
It follows that $j_k,j_{k+1},\ldots,j_{2k}$ are $k+1$ distinct elements of $\{1,\ldots,k\}$.
\end{proof}

We make note of two useful properties of $\NCL$ that follow immediately from the definitions.

\begin{prop}$~$
\label{prop NCL}
\begin{enumerate}
\item If $H$ is an induced subgraph of $G$, then $\NCL(H)\le \NCL(G)$.
\label{NCL monotone}
\item We have $\NCL(G) \le |V(G)|$.
\label{NCL bound}
\end{enumerate}
\end{prop}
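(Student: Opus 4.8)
\textbf{Proof proposal for Proposition \ref{prop NCL}.} Both statements are purely formal consequences of Definition \ref{nes com def}, so the plan is simply to unwind the definitions carefully; there is no real obstacle here, only bookkeeping.

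For part (\ref{NCL monotone}), the plan is to show that a nested complexity sequence for $H$ is already a nested complexity sequence for $G$. Suppose $(b_1,\ldots,b_n)$ is a nested complexity sequence for $H$, witnessed by $a_1,\ldots,a_{n-1}\in V(H)$. Fix $k$ with $1\le k\le n-1$. Since $H$ is induced, for any vertices $u,v\in V(H)$ we have $e(u,v)\in E(H)$ if and only if $e(u,v)\in E(G)$; hence $N_H[a_k]=N_G[a_k]\cap V(H)$. In particular $b_1,\ldots,b_k\in N_H[a_k]$ gives $b_1,\ldots,b_k\in N_G[a_k]$, and $b_{k+1}\notin N_H[a_k]$ together with $b_{k+1}\in V(H)$ gives $b_{k+1}\notin N_G[a_k]$. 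Thus the same sequence $(b_1,\ldots,b_n)$, witnessed by the same $a_1,\ldots,a_{n-1}$, is a nested complexity sequence for $G$. Taking the supremum over all nested complexity sequences for $H$ yields $\NCL(H)\le\NCL(G)$.

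For part (\ref{NCL bound}), the plan is to observe that the entries of a nested complexity sequence must be distinct. If $(b_1,\ldots,b_n)$ is a nested complexity sequence for $G$ and $i<j$, then taking $k=j-1\ge i$, we have $b_i\in N[a_{j-1}]$ but $b_j\notin N[a_{j-1}]$, so $b_i\neq b_j$. Hence $b_1,\ldots,b_n$ are $n$ distinct elements of $V(G)$, giving $n\le|V(G)|$; taking the supremum over all nested complexity sequences gives $\NCL(G)\le|V(G)|$. (If $G$ has no nested complexity sequences at all — e.g.\ if $V(G)=\emptyset$ — the supremum of the empty set is taken to be $0$, and the inequality holds trivially.)
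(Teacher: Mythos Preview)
Your proof is correct and matches the paper's approach: the paper simply states that both parts ``follow immediately from the definitions'' and gives no further argument, so your careful unwinding of Definition~\ref{nes com def} is exactly what is intended, just made explicit.
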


We also give the following examples, which are heavily exploited in the results outlined in \S\ref{intro}.

\begin{example}\label{NCL example}$~$
\begin{enumerate}
\item Let $G$ be a half-graph of height $n$. 
We may partition the vertices as $V(G)=\{a_1,\ldots,a_n\}\cup\{b_1,\ldots,b_n\}$, where $e(a_i,b_j)\in E(G)$ if and only if $i\ge j$. 
Then $(b_1,\ldots,b_n)$ is a nested complexity sequence for $G$, witnessed by $a_1,\ldots,a_{n-1}$. 
Therefore $\NCL(G)\ge n$.
\item Let $G$ be the multipartite graph $K_r(2)$. Let $V(G)=\{b_1,\ldots,b_{2r}\}$ where $e(b_{2k-1},b_{2k})\not\in E(G)$ for $1\leq k\leq r$. 
Set $a_1=b_1$. 
For $2\leq k \le r$ let $a_{2k-2}=b_{2k}$ and $a_{2k-1}=b_{2k-1}$.  
Then $(b_1,\ldots,b_{2r})$ is a nested complexity sequence for $G$, witnessed by $a_1,\ldots,a_{2r-1}$. 
Combined with Proposition \ref{prop NCL}(2), we have $\NCL(G)=2r$.
\end{enumerate}
\end{example}

With the first example in hand, we immediately derive Corollary \ref{omits half-graphs} from Theorem \ref{induced subgraph thm}. 
We can also give the proof of Corollary \ref{enumeration}.

\begin{proof}[Proof of Corollary \ref{enumeration}]
Given $k>0$, let $\mathcal{U}(k)$ denote the class of graphs $G$ for which there is a partition $V(G)=\{a_i:1\le i\le k\}\cup\{b_J:J\subseteq\{1,\ldots,k\}\}$ such that $e(a_i,b_J)$ holds if and only if $i\in J$. 
Any graph in $\mathcal{U}(k)$ contains an induced half-graph of height $k$ (take $\{a_i:1\le i\le k\}\cup\{b_{J_i}:1\le i\le k\}$, where $J_i=\{i,\ldots,k\}$). 
By Corollary \ref{omits half-graphs}, every graph with $\cP_{g,p}$ omits the class $\mathcal{U}(k)$ for $k>6g-6+2p$. 
The result now follows immediately from Theorem 2 of Alon, et al.~\cite{abbm}.
\end{proof}


\section{Proof of Theorem \ref{induced subgraph thm}}
\label{thm proof}

Recall that $[\Sigma(C)]$ refers to the minimal isotopy class, with respect to inclusion, of a subsurface of $S$ that supports the curves in $C$.

\begin{proof}[Proof of Theorem \ref{induced subgraph thm}]
As $\cC(S)$ is an induced subgraph of $\cC^{cl}(S)$, Proposition \ref{prop NCL}(1) ensures that $\NCL(\cC(S)) \le \NCL(\cC^{cl}(S))$. We proceed by showing that $6g-6+2p$ is simultaneously a lower bound for $\NCL(\cC(S))$ and an upper bound for $\NCL(\cC^{cl}(S))$.

For the lower bound, choose a maximal multicurve $\{\gamma_1,\ldots,\gamma_{3g-3+p}\}$. 
For each curve $\gamma_i$, choose a \emph{transversal} $\eta_i$, i.e.~a curve intersecting $\gamma_i$ essentially and disjoint from $\gamma_j$ for $j\ne i$.
That such collections of curves exist is routine (e.g.~a `complete clean
marking' in the language of Masur and Minsky \cite[\S2.5]{masur-minsky2} is an even more restrictive example, see Figure \ref{marking}).

\begin{figure}[H]
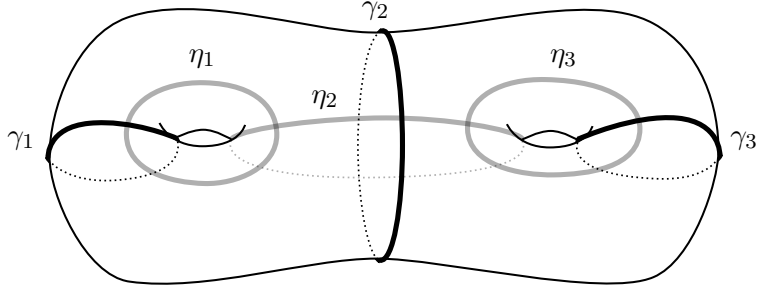

	\centering
	\begin{lpic}{pantsDecomp(9cm)}
		\lbl[]{-10,60;$\gamma_1$}
		\lbl[]{137,114;$\gamma_2$}
		\lbl[]{290,60;$\gamma_3$}
		\lbl[]{65,94;$\eta_1$}
		\lbl[]{116,77;$\eta_2$}
		\lbl[]{215,94;$\eta_3$}
	\end{lpic}
	\caption{A bold maximal multicurve $\{\gamma_1,\gamma_2,\gamma_3\}$ and a lighter set of transversals $\{\eta_1,\eta_2,\eta_3\}$.}
	\label{marking}
\end{figure}

For each $1\le i \le 6g-6+2p$, let the curves $\alpha_i$ and $\beta_i$ be given by
\begin{align*}
\beta_i & = 
	\begin{cases}
		\gamma_i  & 1 \le i \le 3g-3+p \\
		\eta_{i-(3g-3+p)}  & 3g-3+p < i \le 6g-6+2p~,
	\end{cases} \\
\alpha_i & =
	\begin{cases}
		\eta_{i+1} & 1 \le i \le 3g-4+p \\
		\gamma_{i-(3g-4+p)} & 3g-4+p < i \le 6g-7+2p~.
	\end{cases}
\end{align*}
It is straightforward to check that $(\beta_1,\ldots,\beta_{6g-6+2p})$ forms a nested complexity chain for $\cC(S)$, with witnessing curves $(\alpha_1,\ldots,\alpha_{6g-7+2p})$.

Towards the upper bound, suppose $(\beta_1,\ldots,\beta_n)$ is a nested complexity sequence for $\cC^{cl}(S)$,
 so that there exists a vertex $\alpha_k$ in $\cC^{cl}(S)$ with $\beta_1,\ldots,\beta_k\in N[\alpha_k]\not\ni\beta_{k+1}$ for each $1\le k\le n-1$ as in Definition \ref{nes com def}. 
For $1\leq k\leq n$, let $B_k=\{\beta_1,\ldots,\beta_k\}$, and let $[\Sigma_k]=[\Sigma(B_k)]$. 

Choose $k \le n-1$.
Because $B_k \subset B_{k+1}$, we have $[\Sigma_k] \le [\Sigma_{k+1}]$; as $B_k \subset N[\alpha]$, $\alpha_k$ is disjoint from $\Sigma_k$; and since $\beta_{k+1} \in B_{k+1}$ we have that $\beta_{k+1}$ is supported on $\Sigma_{k+1}$. 
Because $\beta_{k+1} \not\in N[\alpha_k]$, $\alpha_k$ and $\beta_{k+1}$ are independent vertices of $\cC^{cl}(S)$, and so there are components of the multicurves $\alpha_k$ and $\beta_{k+1}$ that intersect essentially. 
Lemma \ref{certificate} now applies, so that $[\Sigma_k] < [\Sigma_{k+1}]$.
Thus $\emptyset \ne [\Sigma_1] < [\Sigma_2] < \ldots < [\Sigma_n]$ is a nontrivially nested chain of subsurfaces, and Lemma \ref{chain lemma} implies that $n\le 6g-6+2p$.
\end{proof}

\begin{remark}
\label{tighter bound NCL}
The upper bound can be strengthened for $\cC(S)$ under the additional assumption that the antigraphs of the subgraphs induced on the vertices $\{b_1,\ldots,b_k\}$ are connected for each $k$ (indeed, in this case the negative Euler characteristic of $\Sigma_k$ is strictly less than that of $\Sigma_{k+1}$). 
In particular, if $n\ge 2g+p$ then the bipartite half-graph $H_n$ does not have $\cP_{g,p}$.
\end{remark}


\section{Obstructing $K_r(t)$ and the upper density of the curve graph}
\label{density}

We turn to $K_r(t)$ and the upper density of curve graphs.

\begin{definition}
{Let $G$ be a graph.
\begin{enumerate}
\item If $|G|=n>1$ the \emph{density of $G$} is
\[
\delta(G)=\frac{|E(G)|}{{n\choose 2}}~.
\]
\item If $G$ is infinite the \emph{upper density of $G$} is
\[
\delta(G) = \limsup_{H\subseteq G,~|H|\rightarrow\infty}\delta(H)~.
\]
\end{enumerate}}
\end{definition}

In other words, given an infinite graph $G$, $\delta(G)$ is the supremum over all $\alpha\in(0,1]$ such that $G$ contains arbitrarily large finite subgraphs of density at least $
\alpha$.
Given a fixed $r>0$, it is easy to verify $\lim_{t\rightarrow\infty}\delta(K_r(t))=1-\frac{1}{r}$. 
It is a consequence of a remarkable theorem of Erd\H{o}s and Stone that the graphs $K_r(t)$ witness the densities of all infinite graphs. 
We record this precisely in language most relevant for our application (\cite[Ch.~IV]{bollobas}, \cite{erdos-stone}).

\begin{namedtheorem}[Erd\H os-Stone]\label{erdos stone cor}
Fix $r>0$. For any infinite graph $G$, if $\delta(G)>1-\frac{1}{r}$ then $K_{r+1}(t)$ is a subgraph of $G$ for arbitrary $t$.
\end{namedtheorem}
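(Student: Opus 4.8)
The plan is to derive this statement directly from the classical Erd\H os--Stone--Simonovits theorem on Tur\'an numbers \cite{erdos-stone,bollobas}, which in the form we need asserts: for all $r,t>0$ and every $\varepsilon>0$ there is an integer $N=N(r,t,\varepsilon)$ such that every finite graph $H$ with $|V(H)|\ge N$ and $\delta(H)\ge 1-\frac1r+\varepsilon$ contains a (not necessarily induced) copy of $K_{r+1}(t)$. This is the standard restatement of $\mathrm{ex}(n;K_{r+1}(t))=\bigl(1-\frac1r+o(1)\bigr)\binom{n}{2}$, so no new combinatorics is required; the work is entirely in matching that result to our definition of upper density.

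First I would unwind the definition of $\delta(G)$. Since $\delta(G)>1-\frac1r$, fix a real number $\alpha$ with $1-\frac1r<\alpha<\delta(G)$. By the reformulation of $\delta(G)$ as the supremum of those $\alpha'\in(0,1]$ for which $G$ contains arbitrarily large finite subgraphs of density at least $\alpha'$, the graph $G$ contains arbitrarily large finite subgraphs $H\subseteq G$ with $\delta(H)\ge\alpha$.

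Next, fix $t>0$. Put $\varepsilon=\alpha-\bigl(1-\frac1r\bigr)>0$ and let $N=N(r,t,\varepsilon)$ be as above. Among the arbitrarily large finite subgraphs of $G$ of density at least $\alpha$, choose one, $H$, with $|V(H)|\ge N$. Then $\delta(H)\ge\alpha=1-\frac1r+\varepsilon$, so the Erd\H os--Stone theorem produces a copy of $K_{r+1}(t)$ inside $H$, hence inside $G$. As $t>0$ was arbitrary, $K_{r+1}(t)$ is a subgraph of $G$ for every $t$, which is the assertion.

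The only point requiring care --- and the reason for stating the classical theorem in the asymptotic-density form above --- is the passage through the $\limsup$ in the definition of $\delta(G)$: one must know that the strict inequality $\delta(G)>1-\frac1r$ yields finite subgraphs whose density exceeds $1-\frac1r$ by a \emph{fixed} positive margin $\varepsilon$, uniformly as those subgraphs grow, rather than merely in the limit. This is exactly what the supremum reformulation of upper density supplies, so there is no genuine obstacle beyond correctly invoking the cited result.
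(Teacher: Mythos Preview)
The paper does not give its own proof of this statement: it is recorded as a known result, with citations to \cite[Ch.~IV]{bollobas} and \cite{erdos-stone}, and then used as a black box. Your derivation from the classical finite Erd\H{o}s--Stone theorem is correct and is exactly the routine translation one would expect the reader to supply; the care you take with the fixed margin $\varepsilon$ when passing through the $\limsup$ is appropriate, so there is nothing to correct.
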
 

Using this theorem, we obtain the following relationship between density and nested complexity length. 

\begin{prop}\label{density NCL prop}
Let $G$ be an infinite graph. If $\delta(G)<1$ then 
\[
\NCL(G)\ge\frac{2}{1-\delta(G)}.
\]
\end{prop}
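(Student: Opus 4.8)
The goal is to bound $\NCL(G)$ from below using the density $\delta(G)$, and the natural bridge is the family $K_r(2)$, for which Example \ref{NCL example} already gives $\NCL(K_r(2)) = 2r$. The plan is therefore to find, inside $G$, a copy of $K_r(2)$ for $r$ as large as possible, and then invoke monotonicity of $\NCL$ on induced subgraphs (Proposition \ref{prop NCL}(1)).

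First I would set $\delta = \delta(G) < 1$ and choose an integer $r$ as large as possible subject to $1 - \frac1r \le \delta$, i.e. roughly $r = \lfloor \frac{1}{1-\delta}\rfloor$; more carefully, I want the largest $r$ with $\delta > 1 - \frac1r$ so that the Erd\H os--Stone theorem quoted above applies with parameter $r$ (giving $K_{r+1}(t)$ as a subgraph for every $t$). Applying the named Erd\H os--Stone theorem with $t = 2$ produces a (not necessarily induced) subgraph of $G$ isomorphic to $K_{r+1}(2)$. The subtlety here is that $K_{r+1}(2)$ need only appear as a subgraph, not an induced subgraph, but this is harmless: a subgraph isomorphic to $K_{r+1}(2)$ that sits on $2(r+1)$ vertices of $G$ can only have \emph{more} edges in its induced version, and in particular the induced subgraph on those vertices still contains a $K_{r+1}(2)$ as a spanning subgraph — and a nested complexity sequence for a spanning subgraph is a fortiori one for the induced supergraph (adding edges can only shrink closed neighborhoods' complements, so one must be slightly careful: actually the cleanest route is to observe that the nested complexity sequence witnesses in Example \ref{NCL example}(2) for $K_{r+1}(2)$ uses only the edges and non-edges prescribed by $K_{r+1}(2)$, so the non-edges $e(b_{2k-1},b_{2k})$ must genuinely be absent). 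To sidestep that, I would instead use Erd\H os--Stone to get $K_{r+2}(2)$ as a subgraph on $2(r+2)$ vertices, then pass to the induced subgraph on those vertices, which, having every one of the $\binom{2(r+2)}{2} - (r+2)$ edges of $K_{r+2}(2)$ plus possibly more, contains $K_{r+1}(2)$ as an \emph{induced} subgraph (delete one vertex from each of two classes appropriately, or more simply note that an induced subgraph of $G$ on $2(r+2)$ vertices with at most $r+2$ non-edges contains $K_{r+1}(2)$ as an induced subgraph — choose two classes merging...); this bookkeeping is the one genuinely fiddly point.

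Once an induced copy of $K_{r+1}(2)$ (or $K_r(2)$) is secured, Proposition \ref{prop NCL}(1) and Example \ref{NCL example}(2) give $\NCL(G) \ge \NCL(K_{r+1}(2)) = 2(r+1)$, and then unwinding the choice of $r$ in terms of $\delta$ yields $\NCL(G) \ge \frac{2}{1-\delta(G)}$ after a short estimate. The only real obstacle is the induced-versus-not-induced issue in extracting $K_r(2)$ from Erd\H os--Stone; the rest is arithmetic. I expect the clean statement to come out by choosing $r$ to be the largest integer with $r < \frac{1}{1-\delta(G)}$ (equivalently $1 - \frac1r < \delta(G)$ fails to be forced, need $>$), applying Erd\H os--Stone to get arbitrarily large $K_{r+1}(t)$, specializing $t$ large enough that a clean induced $K_{r+1}(2)$ can be carved out, and concluding $\NCL(G) \ge 2(r+1) \ge \frac{2}{1-\delta(G)}$.
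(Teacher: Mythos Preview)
Your approach is the same as the paper's: use Erd\H{o}s--Stone to find $K_r(t)$ as a subgraph, upgrade to an \emph{induced} $K_r(2)$, and then apply Example \ref{NCL example}(2) together with monotonicity of $\NCL$. You correctly identify the one real obstacle---passing from a subgraph copy of $K_r(t)$ to an induced copy of $K_r(2)$---but your proposed resolutions do not work.

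Your first attempt (take a subgraph $K_{r+2}(2)$ and pass to the induced subgraph on its $2(r+2)$ vertices) fails outright: the induced subgraph may have \emph{all} of the within-part pairs present as edges, in which case it is the complete graph $K_{2(r+2)}$, which contains no induced $K_s(2)$ for any $s\ge 1$. The ad hoc claim that ``an induced subgraph on $2(r+2)$ vertices with at most $r+2$ non-edges contains $K_{r+1}(2)$ as an induced subgraph'' is false for the same reason: there may be zero non-edges. Your closing remark (``specializing $t$ large enough that a clean induced $K_{r+1}(2)$ can be carved out'') is the right instinct, but you never say what controls how large $t$ must be, and without that the argument is incomplete.

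The paper's fix is short and is exactly what your last sentence is groping toward: since $\delta(G)<1$, the graph $G$ has finite clique number $w$ (arbitrarily large cliques would force $\delta(G)=1$). Now apply Erd\H{o}s--Stone with $t=w+1$ to obtain $K_r(w+1)$ as a subgraph. Each of the $r$ parts has $w+1$ vertices and hence, by definition of $w$, contains a pair of vertices that are non-adjacent in $G$. Selecting one such pair from each part yields an induced $K_r(2)$, and the rest of your argument goes through.
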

\begin{proof}
If this inequality fails then we have $\delta(G)>1-\frac{2}{\NCL(G)}$. By the Erd\H{o}s-Stone Theorem, there is $r>\frac{1}{2}\NCL(G)$ such that $K_r(t)$ is a subgraph of $G$ for arbitrarily large $t$. Let $w$ be the size of the largest finite clique in $G$, which exists since $\delta(G)<1$. If we consider a copy of $K_r(w+1)$ in $G$, it follows that each piece of the partition contains a pair of independent vertices. Therefore $K_r(2)$ is an \emph{induced} subgraph of $G$. By Proposition \ref{prop NCL} and Example \ref{NCL example}, $\NCL(G)\ge 2r$, which contradicts the choice of $r$.
\end{proof}

Note that if $G$ is complete then $\NCL(G)=1$. Therefore an infinite graph with upper density $1$ need not have large nested complexity length.

Finally, we obstruct the multipartite graphs $K_r(t)$ from having $\cP_{g,p}$ for large enough $r$ and $t\ge 2$, and employ this fact in the proof of Theorem \ref{density thm}.

\begin{lemma}
\label{max subsurfaces calculation}
The maximum number of pairwise disjoint subsurfaces of $S$ which are not annuli or pairs of pants is $g+\lfloor \frac{g+p}{2} \rfloor -1$.
\end{lemma}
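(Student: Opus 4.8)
The plan is to prove the two directions --- the upper bound $g+\lfloor\frac{g+p}{2}\rfloor-1$ and a matching construction --- by an Euler characteristic count combined with a parity argument on punctures. First I would set up notation: let $\Sigma = \Sigma_1 \sqcup \cdots \sqcup \Sigma_k \subset S$ be a disjoint union of incompressible subsurfaces, none an annulus or pair of pants. The key inequality is that a non-annular, non-pants incompressible subsurface $\Sigma_i$ has $-\chi(\Sigma_i) \ge 2$, with equality exactly for a one-holed torus or a four-holed sphere. Since the $\Sigma_i$ are pairwise disjoint and incompressible, one has $\sum_i (-\chi(\Sigma_i)) \le -\chi(S) = 2g-2+p$ (cutting $S$ along the boundary curves of $\Sigma$ only increases $-\chi$ of the complementary pieces, which is $\ge 0$). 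This immediately gives $2k \le 2g-2+p$, i.e.\ $k \le g-1+\lfloor p/2\rfloor$. The hard part is that this naive bound is $g-1+\lfloor p/2 \rfloor$, whereas the claimed answer is $g + \lfloor\frac{g+p}{2}\rfloor - 1$, which is strictly larger when $g \ge 1$; so the Euler characteristic count alone is \emph{not} tight, and the real content is a sharper bookkeeping that distinguishes how punctures versus handles get consumed.

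So the main step I would carry out is a refined count: each $\Sigma_i$ that is a four-holed sphere (the "cheapest" piece involving no genus) needs $4$ boundary-or-puncture slots but contributes $-\chi = 2$; each one-holed torus needs only $1$ boundary slot but uses a handle of $S$. More useful is to count \emph{topological types} by how much of $S$'s complexity $\xi(S) = 3g-3+p$ each piece occupies: a piece of genus $h$ with $b$ boundary components has $\xi = 3h-3+b \ge 1$ unless it is a pair of pants ($\xi = 0$) or annulus. Summing $\xi(\Sigma_i) \le \xi(S) = 3g-3+p$ over the $k$ pieces, combined with the constraint that the boundary curves of the $\Sigma_i$ form a multicurve (so at most $3g-3+p$ of them) and the complementary pieces must also be accounted for, should be massaged into $k \le g + \lfloor\frac{g+p}{2}\rfloor - 1$. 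I expect the cleanest route is: among the $\Sigma_i$, say $a$ have positive genus and $b = k-a$ have genus zero (hence at least $4$ boundary components each, since genus-zero with $\le 3$ boundary is a pants, annulus, or disk). Then $a \le g$ (genus is additive and bounded by $g$), and a separate count on the number of boundary/puncture incidences of the genus-zero pieces bounds $b$ in terms of the remaining punctures and the genus not used by the genus-zero pieces --- this is where $\lfloor\frac{g+p}{2}\rfloor$ enters, since a four-holed sphere can "absorb" either $4$ punctures, or $2$ punctures and attach to a handle, etc.

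For the lower bound, I would exhibit an explicit configuration realizing the maximum: take $g$ disjoint one-holed tori (one around each handle), leaving a sphere with $g$ boundary curves and $p$ punctures, which has $\lfloor\frac{g+p}{2}\rfloor - 1$ further disjoint four-holed spheres carved out of it (pairing up the $g+p$ marked points/boundary curves into four-holed-sphere blocks, greedily, leaving a pants at the end when $g+p$ is odd). Checking that these are all incompressible, non-annular, non-pants, and pairwise disjoint is routine. The main obstacle I anticipate is getting the extremal count exactly right --- in particular verifying that no exotic configuration (mixing genus into the genus-zero count, or using pieces with more boundary components) beats the $g$ tori plus four-holed spheres arrangement; I would handle this by the case split on $a$ versus $b$ above and an exchange argument showing any optimal configuration can be modified, without decreasing $k$, into the standard one.
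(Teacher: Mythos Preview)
Your opening Euler-characteristic claim is wrong: a one-holed torus has $-\chi = 1$, not $2$ (genus $1$, one boundary component gives $\chi = 2-2-1 = -1$). Consequently your ``naive bound'' $k \le g-1+\lfloor p/2\rfloor$ is not an upper bound at all---it is \emph{smaller} than the correct answer $g+\lfloor\frac{g+p}{2}\rfloor-1$ whenever $g\ge 1$, and your own construction violates it. For instance when $(g,p)=(2,0)$ your ``bound'' gives $1$, yet two disjoint one-holed tori sit in a genus-$2$ surface. This should have been a red flag that the inequality was miscomputed, not merely non-sharp.

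Once this error is corrected, the argument becomes much simpler than the elaborate bookkeeping you outline. The paper's proof first reduces (by cutting any larger piece further) to the case where every $\Sigma_i$ is a one-holed torus or a four-holed sphere; say there are $T$ tori and $F$ four-holed spheres. Since a torus has $-\chi=1$ and a four-holed sphere has $-\chi=2$, additivity of Euler characteristic gives $T+2F\le 2g+p-2$; the homology of $S$ forces $T\le g$. Maximizing $T+F$ over this polygon is a two-line linear program: the optimum is at $T=g$, $F=\lfloor\frac{g+p}{2}\rfloor-1$. Your later sketch with $a$ positive-genus pieces and $b$ genus-zero pieces is groping toward exactly this, but the clean statement of the two linear constraints is what makes the count immediate---there is no need for an exchange argument or any analysis of ``boundary/puncture incidences.'' Your construction for the lower bound is correct and matches the paper's.
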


\begin{proof}
It suffices to consider a pairwise disjoint sequence of subsurfaces in which each component is a four-holed sphere or a one-holed torus; any more complex subsurface can be cut further without decreasing the number of non-annular and non-pair of pants subsurfaces.
Suppose there are $T$ one-holed tori and $F$ four-holed spheres. 
The dimension of the homology of $S$ requires $0\leq T \leq g$. 
Additivity of the Euler characteristic under disjoint union implies that $0\le T+2F \le 2g+p-2$.
Maximizing $T+F$ on this polygon is routine, and the solution is $T=g$ and $F=\lfloor \frac{g+p}{2} \rfloor -1$. 
Moreover, it is straightforward to construct such a collection of subsurfaces of $S$.
\end{proof}

This is enough to obstruct $K_r(t)$ for large $r$.

\begin{lemma}
\label{multipartite obstructed}
For $t>1$, $K_r(t)$ has $\cP_{g,p}$ if and only if $r \le g+\lfloor\frac{g+p}{2}\rfloor-1$.
\end{lemma}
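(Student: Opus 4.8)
The plan is to prove both directions of the biconditional, exploiting Lemma \ref{max subsurfaces calculation} as the engine for the obstruction.

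\medskip

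\noindent\textbf{Sufficiency.} Suppose $r \le g+\lfloor\frac{g+p}{2}\rfloor-1$. By the last sentence of the proof of Lemma \ref{max subsurfaces calculation}, $S$ contains $r$ pairwise disjoint subsurfaces $\Sigma_1,\ldots,\Sigma_r$, each a four-holed sphere or one-holed torus. In each $\Sigma_j$, the curve graph $\cC(\Sigma_j)$ (which I will take in the altered/Farey sense so that it is nonempty with no edges, or more robustly just pick curves) contains infinitely many pairwise-intersecting essential nonperipheral curves: indeed I can choose $t$ distinct curves $c_{j,1},\ldots,c_{j,t}$ in $\Sigma_j$ that pairwise intersect essentially. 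Since $\Sigma_i$ and $\Sigma_j$ are disjoint for $i\ne j$, any curve supported on $\Sigma_i$ is disjoint from any curve supported on $\Sigma_j$. Hence $\{c_{j,s} : 1\le j\le r,\ 1\le s\le t\}$ induces exactly $K_r(t)$ inside $\cC(S)$: no edges within a piece, all edges across pieces. So $K_r(t)$ has $\cP_{g,p}$. (The one subtlety is arranging $t\ge 2$ pairwise-intersecting curves in a one-holed torus or four-holed sphere, which is routine since these surfaces carry infinitely many curves and only finitely many can be pairwise disjoint.)

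\medskip

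\noindent\textbf{Necessity (the main obstacle).} Suppose $K_r(t)$ has $\cP_{g,p}$ with $t\ge 2$; realize it as an induced subgraph of $\cC(S)$ with vertex set partitioned into pieces $P_1,\ldots,P_r$, each of size $t\ge 2$. For each $j$, pick two curves $x_j, y_j \in P_j$; since $P_j$ is an independent set, $x_j$ and $y_j$ intersect essentially, so $\Sigma(\{x_j,y_j\})$ is a connected subsurface that is not an annulus (it supports two intersecting curves) and not a pair of pants (a pair of pants supports no essential nonperipheral curve that intersects another). Set $\Sigma_j = \Sigma(\{x_j,y_j\})$. The crux is to show the $\Sigma_j$ can be taken pairwise disjoint. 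For $i\ne j$, every curve in $P_i$ is adjacent in $\cC(S)$ to every curve in $P_j$, so $x_i,y_i$ are each disjoint from $x_j,y_j$; I need to upgrade "the curves filling $\Sigma_i$ are disjoint from the curves filling $\Sigma_j$" to "$\Sigma_i$ can be isotoped off $\Sigma_j$." This is where I expect to spend effort: a priori $\Sigma(\{x_i,y_i\})$ could be forced to run across $\Sigma_j$. The fix is to argue that $x_i, y_i$ being disjoint from $x_j$ and from $y_j$ forces $x_i, y_i$ to be disjoint from $\Sigma_j = \Sigma(\{x_j,y_j\})$ (a curve disjoint from a filling set of a subsurface is disjoint from that subsurface, since the complement of a filling realization is disks and peripheral annuli), and then small regular neighborhoods of $x_i,y_i$ — hence $\Sigma_i$ after filling contractible complementary components — can be taken disjoint from $\Sigma_j$; symmetrizing and doing this simultaneously for all pairs (realizing all of $\{x_j,y_j : j\}$ in minimal position at once) yields pairwise disjoint $\Sigma_1,\ldots,\Sigma_r$, none an annulus or pair of pants. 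Lemma \ref{max subsurfaces calculation} then gives $r \le g+\lfloor\frac{g+p}{2}\rfloor-1$, completing the proof.

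\medskip

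\noindent The hard part, concretely, is the simultaneous disjointness of the $\Sigma_j$: one must be careful that isotoping $\Sigma_i$ off $\Sigma_j$ does not spoil disjointness of $\Sigma_i$ from $\Sigma_k$. I would handle this by fixing once and for all a realization of the entire multiset $\{x_1,y_1,\ldots,x_r,y_r\}$ in pairwise minimal position (possible since across pieces all intersection numbers are zero and within a piece we just need $x_j$ meeting $y_j$ minimally), taking $\Sigma_j$ to be a regular-neighborhood-fill of $x_j\cup y_j$ in this fixed realization with neighborhoods chosen thin enough to be pairwise disjoint, and then checking that filling in contractible complementary components preserves disjointness because any such disk for $\Sigma_j$ is disjoint from the thin neighborhoods defining $\Sigma_i$.
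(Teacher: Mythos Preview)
Your proof is correct and follows essentially the same approach as the paper: both directions hinge on Lemma~\ref{max subsurfaces calculation}, with sufficiency realized by curves on disjoint one-holed tori and four-holed spheres, and necessity by forming $\Sigma_j=\Sigma(\{x_j,y_j\})$ from two intersecting curves in each piece and invoking the bound on disjoint non-annulus, non-pants subsurfaces. The paper asserts the pairwise disjointness of the $\Sigma_j$ in a single clause (``both curves in $V_i$ are disjoint from the curves in $V_j$, so the subsurfaces $\Sigma(V_i)$ and $\Sigma(V_j)$ are disjoint''), whereas you spell out the filling-complement and simultaneous-realization argument in detail; this extra care is warranted but does not constitute a different method.
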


\begin{proof}
Let $\ell = g+\lfloor\frac{g+p}{2}\rfloor-1$.
First, suppose $\Sigma_1,\ldots,\Sigma_\ell$ is a sequence of pairwise disjoint subsurfaces consisting of tori and four-holed spheres, as guaranteed by Lemma \ref{max subsurfaces calculation}. 
The curves supported on the $\Sigma_i$ induce $K_\ell(\infty)$ as a subgraph of $\cC(S)$. 
Hence $K_r(t)$ is an induced subgraph of $\cC(S)$ for all $t$ and $r\leq \ell$. 
Conversely, suppose towards contradiction that $K_{\ell+1}(2)$ is an induced subgraph of $\cC(S)$, and let $V_1,\ldots,V_{\ell+1}$ be the partition of its vertices.
For each $i\ne j$, both curves in $V_i$ are disjoint from the curves in $V_j$,
so the subsurfaces $\Sigma(V_i)$ and $\Sigma(V_j)$ are disjoint.
Moreover, since the curves in $V_i$ intersect, $\Sigma(V_i)$ is a connected surface that is not an annulus or a pair of pants.
Thus $\Sigma(V_1),\ldots,\Sigma(V_{\ell+1})$ is a sequence of disjoint non-annular and non-pair of pants subsurfaces, contradicting Lemma \ref{max subsurfaces calculation}.
\end{proof}

We can now prove Theorem \ref{density thm}.

\begin{proof}[Proof of Theorem \ref{density thm}]
Let $\ell = g+\lfloor\frac{g+p}{2}\rfloor-1$ again.
By Lemma \ref{multipartite obstructed}, $K_\ell(t)$ has $\cP_{g,p}$ for all $t$, so that $\delta(\cC(S))\geq 1-\frac{1}{\ell}$.
If $\delta(\cC(S)) > 1-\frac{1}{\ell}$ then, as in the proof of Proposition \ref{density NCL prop}, the Erd\H{o}s-Stone Theorem and finite clique number of $\cC(S)$ together imply that $K_{\ell+1}(2)$ is an \emph{induced} subgraph of $\cC(S)$, violating Lemma \ref{multipartite obstructed}.
\end{proof}

\end{document}